%% file: main.tex
\documentclass[final,hidelinks,onefignum,onetabnum]{siamart251104}

\input{shared}

\ifpdf
\hypersetup{
  pdftitle={Structure-preserving integration for magnetic Gaussian wave packet dynamics},
  pdfauthor={Sebastian Merk, Caroline Lasser}
}
\fi

\begin{document}

\maketitle

% REQUIRED
\begin{abstract}
We develop structure-preserving time integration schemes for Gaussian wave packet dynamics associated with the magnetic Schr\"odinger equation. The variational Dirac--Frenkel formulation yields a finite-dimensional Hamiltonian system for the wave packet parameters, where the presence of a magnetic vector potential leads to a non-separable structure and a modified symplectic geometry. 
By introducing kinetic momenta through a minimal substitution, we reformulate the averaged dynamics as a Poisson system that closely parallels the classical equations of charged particle motion. This representation enables the construction of Boris-type integrators adapted to the variational setting. In addition, we propose explicit high-order symplectic schemes based on splitting methods and partitioned Runge--Kutta integrators. The proposed methods conserve the quadratic invariants characterizing the Hagedorn parametrization, preserve linear and angular momentum under symmetry assumptions, and exhibit near-conservation of the averaged Hamiltonian over long time intervals. Rigorous error estimates are derived for both the wave packet parameters and observable quantities, with bounds uniform in the semiclassical parameter. Numerical experiments demonstrate the favorable long-time behavior and structure preservation of the integrators.
\end{abstract}

% REQUIRED
\begin{keywords}
Gaussian wave packets; magnetic Schr\"odinger equation; structure-preserving integrators; symplectic methods; Hamiltonian systems; long-time integration.
\end{keywords}

% REQUIRED
\begin{MSCcodes}
65P10, 81Q05, 37M15, 81S30, 78A35
\end{MSCcodes}

\section{Introduction}
The time-dependent Schr\"odinger equation with electromagnetic potential arises in a wide range of applications, including molecular dynamics, charged particle transport, and plasma physics. Here, we consider the evolution in semiclassical scaling,
\begin{equation}\label{eq:Schro}
i\varepsilon\partial_t\psi(t) = \mathcal H(t)\psi(t),\quad \psi(t_0) = \psi_0,    
\end{equation}
with Hamiltonian operator 
\begin{equation}\label{eq:mag_ham}
\mathcal H(t) = \tfrac{1}{2}\abs{-i\varepsilon\nabla_x - A(t,x)}^2 + V(t,x),\quad x\in\mathbb R^d,
\end{equation}
and semiclassical parameter $\varepsilon>0$. 
In the presence of a magnetic field, the Hamiltonian involves a vector potential $A(t,\cdot)$ and a scalar potential $V(t,\cdot)$, which leads to non-separable dynamics with additional geometric structure. Accurate long-time simulation of such systems requires numerical methods that respect fundamental invariants such as symplecticity, linear and angular momentum conservation, and near-conservation of energy.

In the high-dimensional and/or highly-oscillatory regime, Gaussian wave packet methods provide an efficient reduced description of the dynamics by approximating the solution through parametrized families of localized states \cite{Heller_1976,CoalsonKarplus,KO2019}. 
Variational formulations based on the Dirac--Frenkel principle lead to closed systems of ordinary differential equations that retain essential geometric properties of the underlying Schr\"odinger dynamics.
If the wave packet parameters are chosen in Hagedorn's parametrization, then the approximate evolution has canonical Hamiltonian structure, which forms the basis of many successful semiclassical approximations \cite{LO2013,O2015,LL2020}. 

For nonmagnetic quantum dynamics ($A=0$), structure-preserving time integrators for Gaussian wave packet dynamics have been proposed in \cite{FL2006,FereidaniVanicek2023}. By preservation of the wave packet induced Poisson structure, linear and angular momenta are conserved, and energy has a favorable long-time behavior. In contrast, the presence of magnetic fields ($A\neq0$) introduces additional coupling through the vector potential, which leads to non-separable Hamiltonians and modified symplectic properties.
While variational Gaussian approximations for the magnetic Schr\"odinger equation have recently been derived and analyzed in the continuous-time setting \cite{KO2019,BDHL2023}, the Boris-type integrator proposed in \cite{SBHL2025} appears to be the only structure-adapted time discretization for magnetic variational Gaussian dynamics. A rigorous error analysis of this method, however, has not yet been established.

The present work develops a framework for structure-preserving time integration schemes for Gaussian wave packet dynamics in the presence of magnetic fields. By expressing the variational equations of motion in canonical coordinates, we construct 
averaged potentials $\AB(t,\qB)$ and $\VB(t,\qB)$ and then a parameter Hamiltonian
\[
\hB (t,\qB ,\pB )=\tfrac{1}{2}\pB ^\top \pB -\pB ^\top \AB (t,\qB )+\VB (t,\qB ),
\]
that closely parallels the classical Hamiltonian of charged particle dynamics.
Introducing kinetic momenta through a minimal substitution, we then derive a Poisson formulation for the parameter evolution. This reformulation enables the construction of a staggered-grid Boris integrator that avoids the extrapolation step required in \cite{SBHL2025}. However, the new scheme shares the same limitation, namely the lack of preservation of the symplecticity conditions for the wave packet width matrices, and therefore does not guarantee square integrability of the variational approximation. We therefore move beyond Boris-type schemes and design explicit high-order symplectic integrators based on splitting techniques and partitioned Runge--Kutta methods. These schemes conserve the quadratic invariants underlying the Hagedorn parametrization, in particular square integrability of the wave packet, preserve linear and angular momentum in the presence of symmetries, and yield near-conservation of the averaged Hamiltonian over long time intervals. Importantly, all integrators can be implemented explicitly despite the non-separable structure of the magnetic Hamiltonian.

Rigorous accuracy results are established for both the wave packet parameters and observable quantities, with error bounds that remain uniform in the semiclassical parameter. Numerical experiments illustrate the preservation of geometric structure and the improved long-time behavior compared to non-structure-preserving methods.

\subsection*{Structure of the paper}
The paper is organized as follows. \Cref{sec:mvg} introduces the magnetic Schr\"odinger equation, the Gaussian wave packet parametrization, and the continuous-time variational approximation. \Cref{sec:results} derives the Hamiltonian and Poisson formulations of the variational dynamics and establishes conservation laws. 
\Cref{sec:int} presents Boris-type and high-order symplectic integrators together with their structural properties. \Cref{sec:proofs} provides the proofs for the analysis of accuracy and energy behavior. Numerical experiments are reported in \cref{sec:num}.

%%%%%%%%%%%%%%%%%%%%%%%%%%%%%%%%%%%%%%%%%%%%%%%%%%%%%%%%%%%%
\section{Magnetic variational Gaussians}\label{sec:mvg}
We work in the Hilbert space of square integrable functions $\mathfrak H = L^2(\mathbb R^d,\mathbb C)$ and denote the inner product and the induced norm by $\langle\cdot,\cdot\rangle$ and $\Vert\cdot\Vert$, respectively. We study magnetic Schr\"odinger Hamiltonians of the form \eqref{eq:mag_ham} under the following growth and regularity assumptions. 

\begin{assumption}\label{assumption: potentials}
We suppose that the scalar potential $V : \mathbb{R} \times \mathbb{R}^d \to \mathbb{R}$ and the vector valued potential $A = (A_j)_{j=1,\ldots,d} : \mathbb{R} \times \mathbb{R}^d \to \mathbb{R}^d$ are infinitely often differentiable and satisfy the following conditions: 
        \begin{enumerate}
            \item[(a)] $V(t,\cdot)+\frac{1}{2}|A(t,\cdot)|^2$ is subquadratic, i.e., $\nabla^{\mathbf k} \left(V(t,\cdot)+\frac{1}{2}|A(t,\cdot)|^2\right)$ is bounded for all $\mathbf{k}\in\mathbb N^d$ with $\abs{\mathbf{k}} \ge 2$.
            \item[(b)] $A(t,\cdot)$ and $\partial_t A(t,\cdot)$ are sublinear, i.e., $\nabla^{\mathbf k} A(t,\cdot), 
            \nabla^{\mathbf k} \partial_t A(t,\cdot)$ are bounded for all $\mathbf{k}\in\mathbb N^d$ with $\abs{\mathbf{k}} \ge 1$.
        \end{enumerate}
\end{assumption}
        
With Assumption \ref{assumption: potentials}, magnetic Schr\"odinger dynamics are globally well-posed in~$\mathfrak H$, see e.g. \cite[\S5.3]{MasRob2017} or \cite[\S4]{Yajima}. In particular, the norm of the solution $\psi(t)$ of the Cauchy problem~\eqref{eq:Schro} with initial data $\psi_0\in \mathfrak H$ is a conserved quantity, and for time-independent Hamiltonians~$\mathcal H(t) = \mathcal H$ the energy is conserved as well, 
\[
\Vert \psi(t)\Vert = \Vert \psi_0\Vert\quad \text{and}\quad \langle \psi(t),\mathcal H\psi(t)\rangle = \langle \psi_0,\mathcal H\psi_0\rangle
\]
for all $t\in\mathbb R$.
If the Hamiltonian $\mathcal H(t)$ has further symmetries, then there are more invariants, see also \cref{sec:invariants}.

\subsection{Variational Gaussian wave packets}
We approximate the quantum solution $\psi(t)\approx u(t)$ by a complex Gaussian wave packet $u(t)\in\mathcal M$ 
with
\begin{align*}
 \mathcal{M} = &\left\{ u \in\mathfrak H \left| u(x) = \exp\left(\frac{i}{\varepsilon} \left(\frac{1}{2}(x-q)^\top  C (x-q) + p^\top  (x-q) + \zeta \right)\right), \right. \right. \nonumber\\&\hspace{.2cm}
        \left. \left. q \in \mathbb{R}^d,\, p \in \mathbb{R}^d,\, C = C^\top  \in \mathbb{C}^{d \times d},\, \Im C \text{ is positive definite},\, \zeta \in \mathbb{C} \right.\bigg\} \right..
\end{align*}
For a systematic construction of the approximation, we use the Dirac--Frenkel variational principle, 
        \begin{align}\label{eq:DF}
    i \varepsilon \partial_t u(t) = \mathcal{P}_{u(t)}(\mathcal H(t) u(t)),
    \end{align}
    where $\mathcal{P}_{u} : \mathfrak H \to \mathcal{T}_{u} \mathcal{M}$ is the orthogonal projection onto the tangent space $\mathcal{T}_{u} \mathcal{M}$, which can be identified as 
    \begin{align*}
        \mathcal{T}_u \mathcal{M} = \left\{ \varphi u \mid \varphi \text{ is a complex } d\text{-variate polynomial of degree at most } 2 \right\},
    \end{align*}
    see \cite[\S3]{LL2020}. For structure-preserving integration, it is convenient to use Hagedorn's parametrization of a normalized squeezed wave packet $u\in\mathcal M$, see \cite{Ha1980} and \cite[\S4.1]{LL2020}. We set $C = PQ^{-1}$ with $Q,P\in\mathbb C^{d\times d}$ invertible such that  
\begin{equation}\label{equation: symplecticity}
Y = \begin{pmatrix} \text{Re}\, Q & \text{Im}\, Q \\ \text{Re}\, P & \text{Im}\, P \end{pmatrix}\ \text{ is symplectic},
\end{equation}
that is, 
\[
Y^\top \Omega_d Y = \Omega_d \quad \text{for}\quad \Omega_d = \begin{pmatrix}0 & \Id_d\\ -\Id_d & 0\end{pmatrix}.
\]
Using the determinant of the complex matrix $Q$ for the normalization of the wave packet, $\Vert u \Vert = 1$, we then write 
\begin{align*}
    &u(x)=
        \frac{(\varepsilon\pi)^{-\frac{d}{4}}}{\det(Q)^{\frac{1}{2}}}\exp\left(\frac{i}{\varepsilon}\left(\frac{1}{2}(x-q)^\top PQ^{-1}(x-q+p^\top (x-q) + S\right)\right)
    \end{align*}
    with a real phase factor $S\in \mathbb{R}$. The Dirac--Frenkel principle \eqref{eq:DF} generates a globally well-posed parameter trajectory 
    $t\mapsto (q(t),p(t),Q(t),P(t),S(t))$ such that the corresponding $u(t)\in\mathcal M$ is a powerful semiclassical approximation of $\psi(t)$ with rigorous error estimates, see \cite[Theorem~4.4]{Lubich2008} and \cite[Theorem~3.10]{BDHL2023}.

\subsection{Canonical parameter coordinates}
The magnetic Schr\"odinger Hamiltonian can be seen as a semiclassically scaled pseudo-differential operator in Weyl-quantization, $\mathcal H(t) = \mathrm{op}(h(t))$, with classical symbol
\[
h(t,x,\xi) = \tfrac{1}{2}\abs{\xi - A(t,x)}^2 + V(t,x),\quad (x,\xi)\in\mathbb R^{2d},
\]
see \cref{lem:symbol}.  This allows to write the energy expectation value as a phase space integral weighted with the 
state's Wigner function,
\[
\langle \psi,\mathcal H(t)\psi\rangle = \int_{\mathbb R^{2d}} h(t,\zeta) \mathcal W_\psi(\zeta) d\zeta,\quad \psi\in\mathfrak H.
\]
For a complex Gaussian wave packet $u\in\mathcal M$, the Wigner function is a positive phase space Gaussian,
\[
\mathcal W_u(\zeta) = \frac{(2\pi)^{-d}}{\det(\Sigma_\varepsilon)} \ \exp\!\left(-\frac{1}{2}(\zeta-z)^\top \Sigma_\varepsilon^{-1}(\zeta-z)\right),\quad \zeta\in\mathbb R^{2d},
\]
with center $z=(q,p)\in\mathbb R^{2d}$ and positive definite covariance matrix $\Sigma_\varepsilon = \Yeps \Yeps^\top$ defined by
    \begin{align*}
        \Yeps = \scs Y = \begin{pmatrix}
        \scs\Re Q & \scs\Im Q\\ \scs\Re P & \scs\Im P
    \end{pmatrix},
    \end{align*}
    see \cref{proposition: wigner weyl calculus} or \cite{OhsawaTronci}. 
If we vectorize these parameters we get a 
$2D$-dimensional phase space with $2D = 2(d+2d^2)$ canonical coordinates
    \begin{align}\label{eq:coord}
        &\qB := \begin{pmatrix}
            q \\ \scs\mathrm{vec}(\Re Q) \\\scs\mathrm{vec}(\Im Q)
            \end{pmatrix}, 
        & \pB := \begin{pmatrix}
            p \\ \scs\mathrm{vec}(\Re P) \\\scs\mathrm{vec}(\Im P)
            \end{pmatrix}.
    \end{align}    
More generally, if $\mathcal{A} = \mathrm{op}(a)$ is a linear operator $\mathcal A: D(\mathcal A)\to\mathfrak H$ with smooth Weyl-symbol $a:\mathbb R^{2d}\to\mathbb C$ and if there exists $\beta_\mathbf{k} > 0$ such that 
    \begin{align}\label{equation: observable assumptions}
        \abs{\partial^\mathbf{k} a(z)} \le C_\mathbf{k}\exp(\beta_\mathbf{k} \norm{z}_2),
    \end{align}
    for all $\mathbf{k}\in \mathbb{N}^{2d}$ and $z\in\mathbb R^{2d}$, then we can write a Gaussian average $ \langle a\rangle_u := \langle u, \mathcal{A}u\rangle$ as
    \begin{align}\label{equation: weyl calculus}
        \langle a\rangle_u &=  \int_{\mathbb{R}^{2d}} a(\zeta) \mathcal{W}_u(\zeta)\, d\zeta \nonumber \\ 
        &= (2\pi)^{-d} \int_{\mathbb{R}^{2d}} a(\Yeps\zeta + z) \exp\left(-\frac{1}{2}\zeta^\top \zeta\right)\, d\zeta.
    \end{align}   
    We therefore consider the average $\langle a\rangle_u$ as a nonlinear function 
    \[
    \mathbb R^{2D}\to\mathbb C,\quad \zB = (\qB, \pB) \mapsto \langle a\rangle_u
    \]
    or equivalently $(z, \Yeps) \mapsto \langle a\rangle_u$; the last equality in the definition of the average shows that their $\varepsilon$-dependence comes only through the covariance factor $\Yeps$. 

    %%%%%%%%%%%%%%%%%%%%%%%%%%%%%%%%%%%%%%%%%%%%%%%%%%%%%%%%%%%%%%%%%%
    \subsection{Linear and quadratic invariants}\label{sec:invariants}

If the potentials $A(t,\cdot)$ and $V(t,\cdot)$ have translational (or rotational) symmetries, then the classical system 
\[
\dot q = p-A,\quad \dot p = \nabla_q A^\top p-\nabla_q V
\]
conserves the total linear (or angular) momentum; this generalizes to the variational parameters as shown for the electric Schrödinger equation in \cite[\S4]{FL2006} and extended to the magnetic case in \cite[\S4]{O2015}. There, also a semiclassical angular momentum is introduced, see 
    \cite[eq.(27)]{O2015}, 
    \[
        L_\varepsilon = pq^\top - qp^\top + \frac{\varepsilon}{2}\Re(PQ^* - QP^*).
    \]
    All this can be formulated in terms of the vectorized coordinates $(\qB, \pB)$, if the potentials are invariant as follows.
      
\paragraph{Translation invariant} Then the linear momentum $\sum_{j=1}^d p_j$ is conserved, which can be written as a linear invariant in $\pB$.

\paragraph{Rotation invariant} Then the semiclassical angular momentum $L_\varepsilon$ is conserved, which can be written as a mixed quadratic invariant in $\qB$ and $\pB$. Indeed, $L_\varepsilon$ is uniquely defined by its action on skew-symmetric matrices $K\in\mathbb R^{d\times d}$, 
        \begin{align*}
            L_\varepsilon (K) &= \frac{1}{2}\tr(L_\varepsilon^\top K) = p^\top K q + \frac{\varepsilon}{2}\Re \left( \tr(P^* K Q)\right)\\& = p^\top K q + \frac{\varepsilon}{2}\Re \left(\mathrm{vec}(P)^* (\Id_d\otimes K)\mathrm{vec}(Q)\right)\\
        &= 
                \pB^\top \begin{pmatrix}
                    K & & \\
                    & \Id_d\otimes K & \\
                    & & & \Id_d\otimes K
                \end{pmatrix}\qB,
            \end{align*}
            where the last equation uses  Kronecker product calculus, see e.g. \cite[Ch. 2]{Z2013}.

\smallskip
Moreover, the symplecticity condition \eqref{equation: symplecticity} for the matrix $Y$ can also be viewed as a mixed quadratic form. For example, examining its top-left block, we write 
    $
    E_{(j,k)} = (e_je_k^\top -e_ke_j^\top )\otimes\Id_{d}
    $
    with $j,k=1,\ldots,d$ to get 
        \begin{align*}
            &\Re(Q)^\top \Re(P)-\Re(P)^\top \Re(Q))_{jk} \\
            &=\left(\hspace{-2mm}\begin{array}{ll}
               \mathrm{vec}(\Re Q)^\top   &  \mathrm{vec}(\Im Q) ^\top 
            \end{array}\hspace{-2mm}\right)
            \left(\hspace{-2mm}\begin{array}{cc}
                E_{(j,k)} &  \\
                 & -E_{(j,k)}
            \end{array}\hspace{-2mm}\right)
            \left(\hspace{-2mm}
            \begin{array}{cc}
               \mathrm{vec}(\Re P)  \\  \mathrm{vec}(\Im P) 
            \end{array}\hspace{-2mm}\right).
        \end{align*}
    Rewriting the other blocks similarly, we can define a family of matrices $I_k\in\mathbb R^{D\times D}$ such that 
       \eqref{equation: symplecticity} is equivalent to
       \begin{equation}\label{equation: quadratic}
       \qB^\top I_k\pB = \mathrm{vec}(\Omega)_k,\quad k=1,\ldots,4d^2.
       \end{equation}

%%%%%%%%%%%%%%%%%%%%%%%%%%%%%%%%%%%%%%%%%%%%%%%%%%%%%%%%%%%%%%%%%%%%%%%%
\section{Continuous-time results}\label{sec:results}
We start by presenting our key observation on derivatives of averages with respect to the vectorized parameters $z$ and $\Yeps$, that govern the phase space center and the complex width of a Gaussian $u\in\mathcal M$. 
We have the following two striking relations.
    \begin{theorem}[Derivative formulas]\label{proposition: derivative average}
        Let $\mathcal{A} = \mathrm{op}(a)$ be an observable with smooth symbol $a$ that satisfies the growth assumption \eqref{equation: observable assumptions}, then
        \begin{align*}
            \nabla_z \langle a \rangle_{u}= \langle\nabla_\zeta  a\rangle_{u}\ \text{ and } \ \nabla_{\Yeps} \langle a \rangle_{u}= \langle\nabla_\zeta \nabla_\zeta ^\top  a\rangle_{u}\Yeps.
        \end{align*}
    \end{theorem}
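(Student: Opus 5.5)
The plan is to start from the second representation in \eqref{equation: weyl calculus},
\[
\langle a\rangle_u = (2\pi)^{-d}\int_{\mathbb R^{2d}} a(\Yeps\zeta+z)\,e^{-\frac12\zeta^\top\zeta}\,d\zeta ,
\]
in which the parameters $z$ and $\Yeps$ enter only through the argument of the symbol. The Gaussian weight carries no parameter dependence, so all derivatives can be taken under the integral. To justify this, I use the growth assumption \eqref{equation: observable assumptions}. On a compact neighbourhood of a fixed $(z,\Yeps)$ we have
\[
\abs{\partial^{\mathbf k}a(\Yeps\zeta+z)}\le C_{\mathbf k}\exp\bigl(\beta_{\mathbf k}(\norm{\Yeps}\norm{\zeta}+\norm{z})\bigr),
\]
and this is uniformly integrable against $e^{-\frac12\abs{\zeta}^2}$, even after multiplication by polynomial factors in $\zeta$. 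Dominated convergence then permits differentiation under the integral sign to any order we need.

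\textbf{First formula.} By the chain rule, $\nabla_z[a(\Yeps\zeta+z)]=(\nabla_\zeta a)(\Yeps\zeta+z)$. Reading the resulting integral back through \eqref{equation: weyl calculus}, applied componentwise to the symbols $\partial_j a$ (which satisfy the same growth bounds), gives $\nabla_z\langle a\rangle_u=\langle\nabla_\zeta a\rangle_u$.

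\textbf{Second formula.} For the matrix derivative, the chain rule gives
\[
\partial_{(\Yeps)_{jk}}\,a(\Yeps\zeta+z)=(\partial_j a)(\Yeps\zeta+z)\,\zeta_k .
\]
In matrix form,
\[
\nabla_{\Yeps}\langle a\rangle_u=(2\pi)^{-d}\int (\nabla a)(\Yeps\zeta+z)\,\zeta^\top e^{-\frac12\zeta^\top\zeta}\,d\zeta .
\]
The key step is Gaussian integration by parts (Stein's identity). We write $\zeta_k e^{-\frac12|\zeta|^2}=-\partial_{\zeta_k}e^{-\frac12|\zeta|^2}$ and move the derivative onto $(\partial_j a)(\Yeps\zeta+z)$. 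By the chain rule,
\[
\partial_{\zeta_k}\bigl[(\partial_j a)(\Yeps\zeta+z)\bigr]=\sum_l(\partial_l\partial_j a)(\Yeps\zeta+z)\,(\Yeps)_{lk},
\]
which in matrix form is $(\nabla\nabla^\top a)(\Yeps\zeta+z)\,\Yeps$. The boundary terms vanish because the Gaussian decay beats the exponential growth bound. Since $\Yeps$ is constant in $\zeta$, it can be pulled out of the integral, and the remaining integral is the average of the Hessian by \eqref{equation: weyl calculus}. This yields $\langle\nabla_\zeta\nabla_\zeta^\top a\rangle_u\Yeps$.

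\textbf{Main obstacle.} The main point needing care is the interpretation of $\nabla_{\Yeps}$. In the paper's setup the vectorized coordinates $\qB,\pB$ are built from $\scs\Re Q,\dots$, so one must check that differentiating with respect to the entries of $\Yeps$, treated as independent real variables, is the intended meaning. If the paper intends only the covariance-consistent (symplectic) directions, the formula still holds as the restriction of the unconstrained gradient, since the averaging formula extends smoothly to arbitrary real $2d\times 2d$ matrices $\Yeps$. The rest is routine, including verifying that $\partial^{\mathbf k}a$ inherits the growth bounds needed for the averages of $\nabla a$ and $\nabla\nabla^\top a$ to be defined.
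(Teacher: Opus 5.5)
Your proposal is correct and follows the paper's proof essentially step for step. Both start from the representation \eqref{equation: weyl calculus}, differentiate under the integral, use the chain rule for the $z$-gradient, and for the $\Yeps$-gradient write $\zeta^\top e^{-\frac12\zeta^\top\zeta} = -\nabla_\zeta^\top e^{-\frac12\zeta^\top\zeta}$ and integrate by parts to obtain $\langle\nabla_\zeta\nabla_\zeta^\top a\rangle_u\Yeps$. Beyond the paper, you justify differentiation under the integral and the vanishing boundary terms via the growth assumption, and you note that $\nabla_{\Yeps}$ is the unconstrained gradient in all real entries of $\Yeps$; the paper leaves these points implicit.
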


\begin{proof} 
The proof can be found in \cref{proof:av}.
\end{proof}
This is the primary tool for deriving the Hamiltonian structure, and it is also practically helpful, as we used it with automatic differentiation routines to provide a fully flexible wave packet integrator for the numerical experiments in \cref{sec:num}.

\subsection{General Hamiltonian dynamics}
The combination of the above two derivative identities with the canonical parametrization $\zB = (\qB,\pB)$ introduced in \eqref{eq:coord} reveals several structural properties of the variational approximation that are not immediately apparent. 
For their development, we adopt a general Hamiltonian perspective and view the magnetic Schr\"odinger operator \eqref{eq:mag_ham} as a special case of a  linear operator satisfying the following set of assumptions. 

\begin{assumption}\label{assumption: assumption symbol}
        We suppose that the Weyl-symbol $h(t,\cdot)$ of the Hamiltonian operator $\mathcal{H}(t)=\mathrm{op}(h(t,\cdot))$ is smooth, satisfies the exponential growth assumption \eqref{equation: observable assumptions}, and subquadratic growth, i.e., there exist $C_\mathbf{k}(t)\in L_\mathrm{loc}^1$ such that $\abs{\partial_z^\mathbf{k} h(t, z)}\le C_\mathbf{k}(t)$ for $z\in \mathbb{R}^{2d}$ and $\abs{\mathbf{k}} \ge 2$.
    \end{assumption}

Under these assumptions, the Cauchy problem \eqref{eq:Schro} is well-posed in $\mathfrak H$, see e.g. \cite[\S5.6]{MasRob2017}. For such general Hamiltonian operators, \cite[\S3]{BDHL2023} established well-posedness of the variational approximation $u(t)\in\mathcal M$ and derived the equations of motion for the parameters $(q,p,Q,P,S)$; using the canonical parameter coordinates $(\qB,\pB)$ introduced in \eqref{eq:coord}, these results can be recovered in a concise and transparent manner. These well-posedness results can also be established for Hamiltonians with coercive symbols, due to the existence of the classical flow.
    
    \begin{corollary}[Equations of motion]\label{theorem: symplectic evolution}
        Under Assumption \ref{assumption: assumption symbol} consider the variational wave packet approximation $u(t)\in\mathcal M$. Then, 
        the equations of motion for the canonical parameters $\zB(t)$ are given by Hamilton's equations 
        \begin{align}\label{equation: equations of motion vectorized}
            \dot\zB(t) = \Omega_D \nabla_{\zB}\hB(t,\zB(t)).
            %\frac{d}{dt}\begin{pmatrix}
            %    \qB(t)\\\pB(t)
            %\end{pmatrix} = \begin{pmatrix}
            %    0 & \Id_D\\ -\Id_{D} & 0
            %\end{pmatrix} \begin{pmatrix}
            %    \nabla_{\qB} \hB(t, \qB(t), \pB(t))\\ \nabla_{\pB} \hB(t, \qB(t), \pB(t))
            %\end{pmatrix}.
        \end{align} 
        for the averaged Hamiltonian $\hB(t, \zB(t)) := \langle h(t,\cdot)\rangle_{u(t)}$.
    \end{corollary}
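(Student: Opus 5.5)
The plan is to start from the known equations of motion for the Hagedorn parameters under the Dirac--Frenkel principle \eqref{eq:DF}, as derived in \cite[\S3]{BDHL2023} and \cite[\S4]{LL2020}:
\[
\dot q = \langle \nabla_\xi h\rangle_u,\quad \dot p = -\langle\nabla_x h\rangle_u,\quad \dot Q = \langle \nabla_\xi\nabla_x^\top h\rangle_u Q + \langle \nabla_\xi^2 h\rangle_u P,\quad \dot P = -\langle \nabla_x^2 h\rangle_u Q - \langle \nabla_x\nabla_\xi^\top h\rangle_u P,
\]
with $S$ decoupled since it does not enter $\zB$. In compact form these read $\dot z = \Omega_d\langle \nabla_\zeta h\rangle_u$ and $\dot Y = \Omega_d \langle\nabla_\zeta\nabla_\zeta^\top h\rangle_u Y$. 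Multiplying the second relation by $\sqrt{\varepsilon/2}$ gives $\dot\Yeps = \Omega_d\langle\nabla^2_\zeta h\rangle_u\Yeps$, because $\langle\cdot\rangle_u$ does not depend on the scaling beyond $\Yeps$.

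Next I would apply \cref{proposition: derivative average} with $a=h(t,\cdot)$, which is admissible by Assumption~\ref{assumption: assumption symbol}. This gives $\langle\nabla_\zeta h\rangle_u = \nabla_z\hB$ and $\langle\nabla^2_\zeta h\rangle_u\Yeps = \nabla_{\Yeps}\hB$. Hence $\dot z = \Omega_d\nabla_z\hB$ and $\dot\Yeps = \Omega_d\nabla_{\Yeps}\hB$, where $\nabla_{\Yeps}$ denotes the $2d\times 2d$ matrix of partial derivatives. The center part is then already Hamilton's equations for $(q,p)$.

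The remaining step, which is pure bookkeeping, is to translate the matrix equation into the vectorized coordinates \eqref{eq:coord}. Writing $\Yeps=\begin{pmatrix} \mathcal Q_1 & \mathcal Q_2\\ \mathcal P_1 & \mathcal P_2\end{pmatrix}$ with $\mathcal Q_1=\sqrt{\varepsilon/2}\,\Re Q$ and so on, the relation $\dot\Yeps=\Omega_d\nabla_{\Yeps}\hB$ splits blockwise into $\dot{\mathcal Q}_j = \nabla_{\mathcal P_j}\hB$ and $\dot{\mathcal P}_j=-\nabla_{\mathcal Q_j}\hB$, since left multiplication by $\Omega_d$ swaps the block rows with a sign. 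Applying $\mathrm{vec}$ commutes with these entrywise gradients, and stacking the center and width components in the order of \eqref{eq:coord} yields $\dot\qB=\nabla_{\pB}\hB$ and $\dot\pB=-\nabla_{\qB}\hB$, which is \eqref{equation: equations of motion vectorized}.

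I expect the main obstacle to be justifying the starting parameter equations in exactly this form. In particular, the $Q,P$ equations must be written with averaged Hessians so that the complex-to-real split is consistent: the real and imaginary parts of $\dot Q=\ldots$ must give the two columns of $\dot Y$, and this works because the averaged Hessian is real for real $h$. If one prefers to avoid citing \cite{BDHL2023}, an alternative route is to project $\mathcal H u$ onto $\mathcal T_u\mathcal M$ by testing against the polynomials of degree at most two, but invoking the established equations is shorter.
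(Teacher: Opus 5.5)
Your proposal is correct and follows the paper's proof. The paper likewise takes the parameter equations from \cite[Thm.~4.2]{BDHL2023} and turns them into Hamilton's equations via \cref{proposition: derivative average}, leaving implicit the blockwise/vectorization bookkeeping and the reality of the averaged Hessians that you spell out. Your center equations $\dot q=\langle\nabla_\xi h\rangle_u$, $\dot p=-\langle\nabla_x h\rangle_u$ are the right ones; the paper's display \eqref{equation: equations of motion general} has $x$ and $\xi$ interchanged there.
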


    \begin{proof}
        The proof can be found in \cref{sec:proofs_gen}.
    \end{proof}
    
    \begin{remark}
        The phase $S(t)$ of the wave packet $u(t)$ evolves similarly to the classical action of the system: Using the equation derived in \cite[eq. 3.4d]{BDHL2023} and the phase change of the determinant, we get 
        \begin{align}\label{equation: phase de}
            \frac{d}{dt}\left(S(t) + \frac{\varepsilon}{4}\Re \tr(P(t)Q(t)^*)\right) = \pB(t)^\top \dot{\qB}(t) - \hB(t).
        \end{align}
    \end{remark}

    Global well-posedness of the variational dynamics is straightforward due to the following Corollary.

\begin{corollary}[Well-posedness]\label{proposition: global well-posedness}
Under Assumption \ref{assumption: assumption symbol} consider the variational wave packet approximation $u(t)\in\mathcal M$. Then, the variational Hamiltonian 
\[
\hB(t,\cdot):\mathbb R^{2D}\to\mathbb R, \quad \hB(t,\cdot) = \langle h(t,\cdot)\rangle_{u(t)}
\]
inherits the subquadratic growth or the coercivity of the classical Hamiltonian symbol $h(t,\cdot):\mathbb R^{2d}\to\mathbb R$. In particular, the variational equations of motion \eqref{equation: equations of motion vectorized} are globally well-posed.
\end{corollary}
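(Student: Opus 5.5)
The plan is to read off the growth properties of $\hB(t,\cdot)$ from the Gaussian integral representation \eqref{equation: weyl calculus} combined with \cref{proposition: derivative average}, and then invoke standard ODE theory. Writing $\zB=(\qB,\pB)$, which is a linear rearrangement of $(z,\Yeps)$, we have
\[
\hB(t,\zB)=(2\pi)^{-d}\int_{\mathbb R^{2d}} h(t,\Yeps\zeta+z)\,e^{-\frac12\zeta^\top\zeta}\,d\zeta .
\]
Differentiation under the integral is justified by the exponential growth bound \eqref{equation: observable assumptions} against the Gaussian weight. Iterating \cref{proposition: derivative average} then expresses every $\zB$-derivative of order $m$ as a finite sum of averages $\langle \partial^{\mathbf k}_\zeta h\rangle_u$ with $|\mathbf k|=m$, multiplied by monomials in the entries of $\Yeps$. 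Equivalently, one differentiates $h(t,\Yeps\zeta+z)$ directly, which produces factors of $\zeta$ in place of $\Yeps$.

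\emph{Subquadratic case.} For second derivatives, the $zz$-block is $\langle\nabla^2 h\rangle_u$, which is bounded by $C_2(t)$. The $z\Yeps$-block is $\langle\nabla^3h\rangle_u\Yeps$ and the $\Yeps\Yeps$-block is of the form $\langle\nabla^4 h\rangle_u\Yeps\Yeps+\langle\nabla^2h\rangle_u$. These contain factors of $\Yeps$, so they are not bounded on all of $\mathbb R^{2D}$. Taking the mixed derivative in the direct form instead gives $\int \nabla^2 h(t,\Yeps\zeta+z)\,\zeta\, e^{-|\zeta|^2/2}d\zeta$, which is bounded by $C_2(t)\int|\zeta|e^{-|\zeta|^2/2}d\zeta$. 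Likewise the $\Yeps\Yeps$-block becomes $\int \nabla^2h\,\zeta\zeta^\top e^{-|\zeta|^2/2}d\zeta$, which is also bounded. Hence, with the direct chain-rule form in which every $\Yeps$-derivative contributes a factor $\zeta$, every derivative of $\hB$ of order $\ge2$ is bounded by $C_{\mathbf k}(t)$ times a Gaussian moment, so it lies in $L^1_{\rm loc}$ in $t$. Consequently $\nabla_{\zB}\hB$ is globally Lipschitz in $\zB$ with an $L^1_{\rm loc}$ constant and grows at most linearly. Picard--Lindelöf together with Gronwall then yields a unique global solution of \eqref{equation: equations of motion vectorized}.

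\emph{Coercive case.} Suppose $h(t,\cdot)\ge c|z|^2-C$, or more generally $h\to\infty$. Then $\hB\ge c\,(|z|^2+\tr\Sigma_\varepsilon)-C = c|\zB|^2/\ldots-C$, because $\int|\Yeps\zeta+z|^2 e^{-|\zeta|^2/2}d\zeta$ is proportional to $|z|^2+\|\Yeps\|_F^2$ and $\|\Yeps\|_F^2$ is comparable to $\varepsilon(|\pB|^2+|\qB|^2)$ restricted to the width components. For time-independent $h$, $\hB$ is conserved along \eqref{equation: equations of motion vectorized}, so level sets are compact and local solutions (which exist since $\hB$ is smooth) extend globally. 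For time-dependent $h$, one would need a Gronwall bound on $\partial_t\hB$ relative to $\hB$, assumed analogously.

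I expect the main obstacle to be the bookkeeping in the subquadratic case: recognising that one should not use the $\Yeps$-multiplied formula of \cref{proposition: derivative average} naively, but rather the form with $\zeta$-moments, so that the bounds are uniform in $\zB$ and independent of $\varepsilon$.
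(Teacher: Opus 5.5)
Your subquadratic argument is the paper's argument. You differentiate $h(t,\Yeps\zeta+z)$ directly under the Gaussian integral, so each width derivative contributes a factor of $\zeta$ rather than $\Yeps$, and you bound $\partial_{\zB}^{\mathbf k}\hB$ for $|\mathbf k|\ge 2$ by $C_{\mathbf k}(t)$ times a Gaussian moment, which makes the vector field globally Lipschitz.

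The paper also notes that the flow preserves the symplecticity of $Y(t)$, so the global trajectory really defines a square-integrable $u(t)\in\mathcal M$; you should add that step. The paper does not prove the coercive case, so your coercivity argument is extra. Your bound there is in fact sharper than you state: $|z|^2+\|\Yeps\|_F^2=|\zB|^2$ exactly, because the factor $\sqrt{\varepsilon/2}$ is already built into $\zB$.
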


    \begin{proof}
        The proof can be found in \cref{sec:proofs_gen}.
    \end{proof}

    %%%%%%%%%%%%%%%%%%%%%%%%%%%%%%%%%%%
    \subsection{Magnetic dynamics} \label{sec:mag_dyn}
    For the magnetic Hamiltonian \eqref{eq:mag_ham}, the variational Hamiltonian of \cref{theorem: symplectic evolution} takes the form 
        \begin{align}\label{equation: effective hamiltonian}
        \hB (t,\qB ,\pB )=\frac{1}{2}\pB ^\top \pB -\pB ^\top \AB (t,\qB )+\VB (t,\qB ),
    \end{align}
    with averaged potentials 
    \begin{align}\label{eq:pot_mag}
            &\AB(t,\qB) := \begin{pmatrix}
                \langle A(t,\cdot)\rangle_u \\ \mathrm{vec}\left(\langle J_A(t,\cdot)\rangle_u\scs\Re Q\right) \\\mathrm{vec}\left(\langle J_A(t,\cdot)\rangle_u\scs\Im Q\right)
            \end{pmatrix},\\*[1ex]\nonumber
            &\VB(t,\qB) := \tfrac12 \langle|A(t,\cdot)|^2\rangle_u + \langle V(t,\cdot)\rangle_u,
    \end{align}
    where $J_A(t,x)$ denotes the Jacobi matrix of $A(t,x)$, see \cref{lem:symbol}.    The variational equations of motion are thus given by 
    \begin{align}\label{equation: equations of motion magnetic}
        \dot{\qB}(t) = \pB(t) - \AB(t,\qB(t)), \quad \dot{\pB}(t) = J_{\AB}(t,\qB(t))^\top\pB(t) - \nabla_{\qB}\VB(\qB(t)),
    \end{align}
    with Jacobian $J_{\AB}(t,\qB) = (\nabla_{\qB} \AB(t,\qB)^\top)^\top$, see also \cite[\S3]{BDHL2023}. In particular, if the vector potential $A$ is linear in $x$, $A(t,x) = M_A(t)x$ for some $M_A(t)\in\mathbb R^{d\times d}$, then
    \begin{align*}
    &\AB(t,\qB) = \begin{pmatrix}
        M_A(t)&&\\&\Id_d\otimes M_A(t)&\\&&\Id_d\otimes M_A(t)
    \end{pmatrix}\qB,\\
    %\end{align*} 
    %\begin{align*}
    %        &\AB(t,\qB) = \begin{pmatrix}
    %            M_A(t)\qB \\ \mathrm{vec}\left(\langle M_A(t)\scs\Re Q\right) \\\mathrm{vec}\left(M_A(t)\scs\Im Q\right)
    %        \end{pmatrix},\\*[1ex] 
            &\VB(t,\qB) = \frac{1}{2}\AB(t,\qB)^\top \AB(t,\qB) + \langle V(t,\cdot)\rangle_u.
    \end{align*}
    The variational system retains even more properties of classical charged particle dynamics; we can perform a minimal substitution from canonical to kinetic momenta and obtain analogous equations of motion.
    
    \begin{corollary}[\cite{SBHL2025}, Theorem~4.1]\label{corollary: equations of motion charged particle}
    Under Assumption \ref{assumption: potentials} consider the variational wave packet approximation $u(t)\in\mathcal M$. 
    Introduce the kinetic momentum 
    \begin{align}\label{equation: kinetic momenta}
            \vB (t)=\pB (t)-\AB (t,\qB (t)).
        \end{align}
    Then, the averaged Hamiltonian, written as a function of $(t,\qB,\vB)$, becomes
    \begin{align*}
            \hB(t,\qB ,\vB ) = \frac{1}{2} \vB ^\top \vB  + \bar{\VB }(t,\qB )
    \end{align*}
    with $\bar{\VB }(t,\qB ) = \VB(t,\qB)-\frac{1}{2}\AB(t,\qB)^\top \AB(t,\qB)$. 
    The equations of motion for the parameters $(\qB(t), \vB(t))$ are given by 
        \begin{align}\label{equation: equations of motion charged particle}
            \dot{\qB }(t) = \vB (t),\quad
            \dot{\vB }(t) = -\BB (t,\qB (t)) \vB (t)+\EB (t,\qB (t)),
        \end{align}
        where we write
        \begin{align*}
            \BB (t,\qB )&=J_{\AB}(t,\qB) - J_{\AB}(t,\qB)^\top,\\
            \EB (t,\qB )&=-\nabla_{\qB}\bar{\VB }(t,\qB )-\partial_t\AB (t,\qB ).
        \end{align*}
        If $A(t,x) = M_A(t)x$ for some $M_A(t)\in\mathbb R^{d\times d}$, then 
        $\BB(t,\qB) = \Id_{1+2d} \otimes B(t)$ with $B(t) = M_A(t)-M_A(t)^\top$ and $\bar{\VB}(t,\qB) = \langle V(t,\cdot)\rangle_{u}$.
    \end{corollary}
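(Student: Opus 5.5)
The plan is to start from the canonical equations of motion \eqref{equation: equations of motion magnetic}, which follow from \cref{theorem: symplectic evolution} with the averaged Hamiltonian \eqref{equation: effective hamiltonian}, and to make the change of variables $\vB = \pB - \AB(t,\qB)$. The expression for $\hB$ in the new variables is pure algebra: complete the square,
\[
\tfrac12\pB^\top\pB - \pB^\top\AB + \VB = \tfrac12|\pB-\AB|^2 - \tfrac12\AB^\top\AB + \VB ,
\]
which gives $\tfrac12\vB^\top\vB + \bar\VB(t,\qB)$. The first equation of \eqref{equation: equations of motion charged particle} is then immediate, since $\dot\qB = \pB - \AB = \vB$.

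For the second equation, I would differentiate $\vB(t) = \pB(t) - \AB(t,\qB(t))$ along the flow. This gives
\[
\dot\vB = \dot\pB - \partial_t\AB - J_{\AB}\dot\qB = J_{\AB}^\top\pB - \nabla_\qB\VB - \partial_t\AB - J_{\AB}\vB .
\]
I then substitute $\pB = \vB + \AB$ and use $\nabla_\qB(\tfrac12\AB^\top\AB) = J_{\AB}^\top\AB$. This yields
\[
J_{\AB}^\top\pB - \nabla_\qB\VB = J_{\AB}^\top\vB - \nabla_\qB\bar\VB ,
\]
and hence $\dot\vB = -(J_{\AB}-J_{\AB}^\top)\vB - \nabla_\qB\bar\VB - \partial_t\AB = -\BB\vB + \EB$. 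Two points need care here. First, the time derivative $\partial_t\AB$ is taken at fixed $\qB$; this is legitimate because $\AB$ depends on $t$ only through $A(t,\cdot)$, and the averaging formula \eqref{equation: weyl calculus} depends on $\qB$ only via $(z,\Yeps)$. Second, \cref{assumption: potentials} is exactly what is needed for the averages, and their derivatives in $t$ and $\qB$, to exist and to commute with the Gaussian integral.

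The main obstacle is not this algebra but checking that the averaged $\hB$ really has the form \eqref{equation: effective hamiltonian} with $\AB$ as in \eqref{eq:pot_mag}. In particular, the width components of $\AB$ must be $\mathrm{vec}(\langle J_A\rangle_u \scs\Re Q)$ and $\mathrm{vec}(\langle J_A\rangle_u\scs\Im Q)$. The route I would take is via \cref{proposition: derivative average}. The symbol term $-\xi^\top A(x)$ is bilinear in the phase space variable, so its average is $-p^\top\langle A\rangle_u$ plus a covariance correction. That correction is the mixed block $\langle\nabla_\xi\nabla_x^\top(\xi^\top A)\rangle_u = \langle J_A\rangle_u$ paired with the cross-covariance $\tfrac{\varepsilon}{2}(\Re P\,\Re Q^\top + \Im P\,\Im Q^\top)$, and vectorizing this pairing gives $-\pB^\top\AB$. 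This identification is presumably the content of the cited \cite[Theorem~4.1]{SBHL2025} and of \cref{lem:symbol}, and I would take it as the starting point.

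In the linear case $A(t,x) = M_A(t)x$, the Jacobian $J_{\AB} = \mathrm{blockdiag}(M_A,\Id_d\otimes M_A,\Id_d\otimes M_A)$ is constant in $\qB$. Therefore $\BB = \Id_{1+2d}\otimes(M_A - M_A^\top)$, where the block structure is rearranged via Kronecker calculus. In addition, $\tfrac12\langle|A|^2\rangle_u = \tfrac12\AB^\top\AB$, since the second moment of a linear function under the Gaussian equals the squared mean plus the covariance trace, and that trace matches the width components of $\AB$. Consequently $\bar\VB = \langle V\rangle_u$.
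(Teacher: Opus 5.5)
Your proposal is correct and matches the paper's proof, which is the same direct calculation: differentiate $\vB = \pB - \AB(t,\qB)$ along \eqref{equation: equations of motion magnetic}, substitute $\pB = \vB + \AB$, and use $J_{\AB}^\top\AB = \nabla_{\qB}(\tfrac12\AB^\top\AB)$. The other pieces you supply are the completing-the-square form of $\hB$, the linear-case identities $\BB = \Id_{1+2d}\otimes B$ and $\bar\VB = \langle V\rangle_u$, and the identification of $\AB$, which the paper defers to \cref{lem:symbol}; these fill in details the paper's proof leaves implicit.
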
 

    \begin{proof}
        The proof can be found in \cref{sec:proofs_gen}.
    \end{proof}

The vectorized kinetic momentum $\vB$ introduced in \eqref{equation: kinetic momenta} contains the shifted matrix $P-\langle J_A\rangle_u$. For preservation of the symplecticity condition \eqref{equation: symplecticity}, which guarantees the square integrability of the wave packet, we will use the following characterization with $M = \langle J_A\rangle_u$ later on.

\begin{lemma}[Invariants for kinetic momenta]\label{proposition: magnetic Invariants}
Let $M\in\mathbb R^{d\times d}$. Two matrices $Q,P\in\mathbb C^{d\times d}$ define a matrix $Y\in\mathbb R^{2d\times 2d}$ satisfying the symplecticity condition \eqref{equation: symplecticity} if and only if the matrix 
\begin{align*}
            Y_M = \begin{pmatrix}
                \Re Q & \Im Q\\ \Re P-M \Re Q & \Im P-M \Im Q
            \end{pmatrix} 
            %\left(\Id + \Omega \begin{pmatrix}
            %    \langle J_A\rangle_u & 0\\ 0 & 0
            %\end{pmatrix}\right)Y
        \end{align*}
        satisfies 
        \begin{align}\label{equation: magnetic symplecticity}
            Y_M^\top
            %\underbrace{
            \begin{pmatrix}
                M-M^\top &\Id_d\\-\Id_d & 0
            \end{pmatrix}
            %}_{=:\Omega_B}
            Y_M = \Omega_d.
        \end{align}
    \end{lemma}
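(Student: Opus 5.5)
Write $Y_M = T_M Y$ with the block lower-triangular shear
\[
T_M = \begin{pmatrix} \Id_d & 0\\ -M & \Id_d\end{pmatrix},
\]
which is invertible with $T_M^{-1} = \begin{pmatrix} \Id_d & 0\\ M & \Id_d\end{pmatrix}$. Indeed, the first block row of $T_M Y$ is $(\Re Q,\ \Im Q)$ and the second block row is $(\Re P - M\Re Q,\ \Im P - M\Im Q)$, which is exactly $Y_M$. The whole statement then reduces to a single matrix identity, namely
\begin{equation*}
T_M^\top \begin{pmatrix} M-M^\top & \Id_d\\ -\Id_d & 0\end{pmatrix} T_M = \Omega_d .
\end{equation*}

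I verify this by direct block multiplication. First,
\[
\begin{pmatrix} M-M^\top & \Id_d\\ -\Id_d & 0\end{pmatrix} T_M = \begin{pmatrix} M-M^\top-M & \Id_d\\ -\Id_d & 0\end{pmatrix} = \begin{pmatrix} -M^\top & \Id_d\\ -\Id_d & 0\end{pmatrix}.
\]
Next, $T_M^\top = \begin{pmatrix} \Id_d & -M^\top\\ 0 & \Id_d\end{pmatrix}$. Multiplying this on the left of the previous result gives top-left block $-M^\top + M^\top = 0$, top-right block $\Id_d$, bottom-left block $-\Id_d$, and bottom-right block $0$. This is exactly $\Omega_d$.

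With this identity in hand, the equivalence follows. For any $Y$,
\[
Y_M^\top \begin{pmatrix} M-M^\top & \Id_d\\ -\Id_d & 0\end{pmatrix} Y_M = Y^\top T_M^\top \begin{pmatrix} M-M^\top & \Id_d\\ -\Id_d & 0\end{pmatrix} T_M Y = Y^\top \Omega_d Y.
\]
Hence \eqref{equation: magnetic symplecticity} holds if and only if $Y^\top\Omega_d Y = \Omega_d$, which is the symplecticity condition \eqref{equation: symplecticity}. Both directions hold at once, since the two sides are literally equal.

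The only step needing care is the sign bookkeeping in the block product. It is worth noting that the skew-symmetric part $M-M^\top$ is exactly what is needed to cancel the $-M^\top$ term, so the statement holds for arbitrary real $M$ without any assumption of symmetry. No further obstacle is expected; no earlier result of the paper is needed beyond the definition \eqref{equation: symplecticity}.
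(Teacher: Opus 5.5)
Your proof is correct and is essentially the paper's argument. The paper only states that the lemma follows by direct computation, and your factorization $Y_M = T_M Y$ with the shear $T_M$, combined with the verified identity that $T_M^\top$ times the structure matrix of \eqref{equation: magnetic symplecticity} times $T_M$ equals $\Omega_d$, is that computation carried out cleanly, giving both directions at once.
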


    \begin{proof}
        This can be proven by direct computation.      
    \end{proof}

%%%%%%%%%%%%%%%%%%%%%%%%%%%%%%%%%%%%%%%%%%%%%%
\section{Time-integrators}\label{sec:int}
We now construct a Boris integrator and higher-order splitting methods, aiming at the preservation of the geometric structure of the 
variational dynamics. For notational simplicity, we consider time-independent fields. For the more general time-dependent case, 
the fields shall be evaluated at time $t$ when updating from $t$ to $t+\stepsize$ with step size $\stepsize>0$, see \cite{HL2018}.

\subsection{Boris-type time integration}
The Boris algorithm for classical charged particle dynamics $\dot q = v$, $\dot v = -B(q) v + E(q)$ works with 
$q_n\approx q(t_n)$ and $v_n\approx v(t_n-\frac\stepsize2)$ on a staggered time grid, setting
\[
\frac{q_{n+1}-q_n}{\stepsize} = v_{n+1},\quad
\frac{v_{n+1}-v_n}{\stepsize} = - B(q_n) \frac{v_{n+1}+v_n}{2} + E(q_n),\quad n\ge 0. 
\]
Turning this into an explicit time integrator, the equation for the kinetic momenta may be rewritten as 
\[
(\Id_3 + \hat\Omega_n) v_{n+1} = (\Id_3-\hat\Omega_n) v_n + \stepsize E(q_n)\quad \text{with}\quad 
\hat\Omega_n = \frac\stepsize2 B(q_n),
\]
see \cite[eq.(9)]{QZXLST2013}. We apply this formulation to the variational equations of motion of 
\cref{corollary: equations of motion charged particle} 
and define the one-step map 
        \begin{align}\label{equation: Boris one-step map}
            &\Psi_{\stepsize}^B:\mathbb{R}^{2D} \to \mathbb{R}^{2D},\nonumber \\&\quad \begin{pmatrix}
                \qB \\\vB ^s
            \end{pmatrix}  \mapsto \begin{pmatrix}
                 \qB  + \stepsize \left(R\left(\frac{\stepsize }{2}\BB (\qB )\right)\vB ^s + \stepsize \left(\Id+\frac{\stepsize }{2}\BB (\qB )\right)^{-1} \EB (\qB )\right)\\ R\left(\frac{\stepsize }{2}\BB (\qB )\right)\vB ^s + \stepsize \left(\Id+\frac{\stepsize }{2}\BB (\qB )\right)^{-1} \EB (\qB )
            \end{pmatrix},
        \end{align}
        where 
        \[
        R(\tfrac\stepsize2 \BB(\qB)) = (\Id_D + \tfrac\stepsize2\BB(\qB))^{-1}(\Id_D - \tfrac\stepsize2\BB(\qB))
        \] 
        is the Cayley transform of the skew-symmetric matrix $\frac\stepsize2\BB(\qB)$. 
        The method is now applied with initial conditions $\qB _0,\vB _0$ and a first step $\vB _1^s = \vB _0 -\frac{\stepsize }{2}\BB (\qB _0)\vB_0 + \frac{\stepsize }{2}\EB (\qB _0)$.
        The numerical trajectory $(\qB _n,\vB _n^s)_{n\le n^*}$ is given by $(\qB _{k+1},\vB _{k+1}^s) = \Psi_{\stepsize}^B(\qB _k,\vB _k^s)$ and an approximation for $\vB _k$ is given by $\frac{\vB _{k+1}^s+\vB _k^s}{2}$.
%    \end{definition}
The structural advantages of the Boris algorithm, as described in \cite{QZXLST2013}, rely on the Cayley transform and hold for this integrator as well. 
    
    \begin{remark}
        The one-step map \eqref{equation: Boris one-step map} defines the same Boris-type method as \cite{SBHL2025} only for the case of a linear vector potential $A$. In general, the two methods differ, since \cite{SBHL2025} applies the Boris approach to the first-order $A$ derivatives in $\BB$ and treats higher-order derivatives by extrapolation.
    \end{remark}

Rewriting the momentum update of \eqref{equation: Boris one-step map} as
\[
R\left(\frac{\stepsize }{2}\BB (\qB )\right)\vB ^s + \stepsize \left(\Id+\frac{\stepsize }{2}\BB (\qB )\right)^{-1} \EB (\qB )
 = R\left(\frac{\stepsize }{2}\BB(\qB)\right)\left(\vB^s + \frac{\stepsize }{2}\EB(\qB)\right) + \frac{\stepsize }{2}\EB(\qB)
\]
we obtain $\Psi_{\stepsize} ^B=\Psi_{\stepsize}^\mathrm{kin}\circ \Psi_{\frac{\stepsize }{2}}^\mathrm{pot} \circ \Psi_{\stepsize}^\mathrm{mag} \circ \Psi_{\frac{\stepsize }{2}}^\mathrm{pot}$ with sub-steps 
    \begin{align*}
        &\Psi_{\stepsize}^\mathrm{kin}\begin{pmatrix}
            \qB\\\vB^s
        \end{pmatrix} = \begin{pmatrix}
            \qB + \stepsize  \vB^s\\\vB^s
        \end{pmatrix}, \quad 
        \Psi_{\stepsize}^\mathrm{pot}\begin{pmatrix}
            \qB\\\vB^s
        \end{pmatrix} = \begin{pmatrix}
            \qB\\\vB^s + \stepsize  \EB(\qB)
        \end{pmatrix},\\
        &\Psi_{\stepsize}^\mathrm{mag}\begin{pmatrix}
            \qB\\\vB^s
        \end{pmatrix} = \begin{pmatrix}
            \qB \\R\left(\frac{\stepsize }{2}\BB(\qB)\right)\vB^s
        \end{pmatrix},
    \end{align*}
    where $\Psi_{\stepsize}^\mathrm{mag}$ approximates the magnetic evolution by the implicit midpoint rule. 
    This motivates the definition of a symmetric splitting integrator,
    \begin{align}\label{equation:  splitting Boris integrator}
            &\Psi_{\stepsize}^S = \Psi_{\frac{\stepsize }{2}}^\mathrm{kin}\circ \Psi_{\frac{\stepsize }{2}}^\mathrm{pot} \circ \Psi_{\stepsize}^\mathrm{mag} \circ \Psi_{\frac{\stepsize }{2}}^\mathrm{pot} \circ \Psi_{\frac{\stepsize }{2}}^\mathrm{kin},
        \end{align}
    that agrees with the Boris formulation up to reordering of the individual updates and the initial step. 
    For this Boris integrator, we have a near-conservation result for the case of magnetic dynamics with linear vector potential. 
    The crucial symplecticity condition \eqref{equation: symplecticity}, that guarantees the square integrability of the wave packet, is preserved up to second order with an explicit remainder term.

    \begin{proposition} [Near-conservation]\label{proposition: modified magnetic invariant}
        Suppose that the potentials $A$ and $V$ do not depend on time and that $A(x) = Mx$ for some $M\in\mathbb R^{d\times d}$. 
        Consider $\zB_0\in\mathbb R^{2D}$ and $\zB_1 = \Psi_{\stepsize}^S(\zB_0)$ for $\stepsize>0$, and 
        define associated matrices $Y_0$ and $Y_1$ in $\mathbb R^{2d\times 2d}$ from reshaping the corresponding 
        entries into matrix form. Then, 
        \[
        Y_1^\top \Omega_B(\tau) Y_1 = Y_0^\top \Omega_B(\tau) Y_0,
        \]
        where 
        \[
        \Omega_B(\tau) = \begin{pmatrix}
                B &\Id_d\\-\Id_d & -\frac{\stepsize ^2}{4}B
            \end{pmatrix}, \quad B = M-M^\top,
        \]
        differs from the structure matrix of \cref{proposition: magnetic Invariants} in the lower right block by $-\frac{\stepsize ^2}{4}B$. 
    \end{proposition}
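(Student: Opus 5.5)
The plan is to reduce the statement to a computation on the width matrices alone, and then to verify the identity for the full composite map $\Psi^S_\stepsize$ rather than for the individual substeps. Checking the substeps one at a time will not work. The shear $\Psi^{\mathrm{kin}}$ does not preserve $X^\top BX$, and the rotation $\Psi^{\mathrm{mag}}$ does not preserve $X^\top W-W^\top X$ (notation below). The modified block $-\frac{\stepsize^2}{4}B$ is exactly what is needed to make the errors of the two kinetic half steps cancel.

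\emph{Step 1 (reduction).} Since $A(x)=Mx$, \cref{corollary: equations of motion charged particle} gives $\BB=\Id_{1+2d}\otimes B$ and $\bar\VB(\qB)=\langle V\rangle_u$. Write $X=\scs(\Re Q,\ \Im Q)\in\mathbb R^{d\times 2d}$ and $W=\scs(\Re P-M\Re Q,\ \Im P-M\Im Q)$ for the width part of $(\qB,\vB)$ after reshaping. By \cref{proposition: derivative average}, the width component of $\EB(\qB)=-\nabla_{\qB}\langle V\rangle_u$, reshaped, equals $-G X$ with $G=\langle\nabla^2 V\rangle_u$. This matrix is real symmetric, and it depends on the state only through $q$ and $X$, which are frozen during a potential substep. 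The substeps then act as follows (with $s=\stepsize/2$):
\begin{itemize}
\item $\Psi^{\mathrm{kin}}_{s}$: $X\mapsto X+sW$;
\item $\Psi^{\mathrm{pot}}_{s}$: $W\mapsto W-sGX$;
\item $\Psi^{\mathrm{mag}}_\stepsize$: $W\mapsto R(sB)W$.
\end{itemize}
With $Y=\binom{X}{W}$, the claim becomes the invariance of
\[
F(X,W)=X^\top BX+X^\top W-W^\top X-\tfrac{\stepsize^2}{4}W^\top BW .
\]
Here I read $Y_0,Y_1$ as the matrices of the kinetic parametrisation $Y_M$ from \cref{proposition: magnetic Invariants}.

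\emph{Step 2 (compress the inner three steps).} Following the rewriting of the Boris momentum update given above, the composition $\Psi^{\mathrm{pot}}_{s}\circ\Psi^{\mathrm{mag}}_\stepsize\circ\Psi^{\mathrm{pot}}_{s}$, evaluated at a fixed $X_{1/2}$, is equivalent to the implicit relation
\[
W_1-W_0=-\stepsize B\,\bar W-\stepsize G X_{1/2},\qquad \bar W=\tfrac12(W_0+W_1).
\]
This holds because $(\Id+sB)^{-1}$ and $R(sB)$ commute with $B$. The full step is then
\[
X_{1/2}=X_0+sW_0,\qquad X_1=X_{1/2}+sW_1 .
\]

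\emph{Step 3 (direct verification).} Substitute $X_0=X_{1/2}-sW_0$ and $X_1=X_{1/2}+sW_1$ into $F(X_1,W_1)-F(X_0,W_0)$ and expand in $X_{1/2}$, $W_0$, $W_1$.
\begin{itemize}
\item The $s^2$ contributions from $X^\top BX$ are $s^2(W_1^\top BW_1-W_0^\top BW_0)$. They cancel exactly against the difference of the $-\frac{\stepsize^2}{4}W^\top BW$ terms, since $s^2=\stepsize^2/4$.
\item The remaining cross terms are linear in $X_{1/2}$: $sX_{1/2}^\top B(W_1+W_0)+s(W_1+W_0)^\top BX_{1/2}$, together with $X_{1/2}^\top(W_1-W_0)-(W_1-W_0)^\top X_{1/2}$.
\item The terms quadratic in $W$ coming from $X^\top W-W^\top X$ vanish identically, because $W^\top W$ is symmetric.
\end{itemize}
Now insert $W_1-W_0=-\stepsize B\bar W-\stepsize GX_{1/2}$. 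The $G$-terms give $-\stepsize X_{1/2}^\top GX_{1/2}+\stepsize X_{1/2}^\top G^\top X_{1/2}=0$ because $G=G^\top$. The $B$-terms give $-\stepsize X_{1/2}^\top B\bar W+\stepsize\bar W^\top B^\top X_{1/2}$, which cancels the cross terms above using $B^\top=-B$ and $s(W_0+W_1)=\stepsize\bar W$.

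The main obstacle is this bookkeeping in Step 3, in particular getting the signs right between the skew form $B$ and the antisymmetric part $X^\top W-W^\top X$. To keep it manageable I would first treat the case $G=0$ (pure magnetic rotation plus free flight), where the identity is a two-line check. I would then add the $G$-contribution separately; it drops out by symmetry alone. A secondary point to confirm is that the $q$-component and the $|A|^2$ part do not couple into the width equations beyond the symmetric matrix $G$. This follows from $\bar\VB=\langle V\rangle_u$ and the block-diagonal form of $\BB$.
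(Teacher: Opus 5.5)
Your proof is correct and is essentially the paper's argument. The paper likewise merges $\Psi^{\mathrm{pot}}_{\tau/2}\circ\Psi^{\mathrm{mag}}_{\tau}\circ\Psi^{\mathrm{pot}}_{\tau/2}$ into a single linear map $\Phi_\tau$ on the width matrices and verifies the identity for the full composite by direct computation; it differs only in bookkeeping. The paper pulls $\Omega_B(\tau)$ back by congruence through the three factors, using the explicit Cayley and $(\Id+\frac{\tau}{2}B)^{-1}$ factors and passing through the intermediate forms $\begin{pmatrix} B & \Id\pm\frac{\tau}{2}B\\ -\Id\pm\frac{\tau}{2}B & 0\end{pmatrix}$, whereas you expand the quadratic form around $X_{1/2}$ with the inverse-free midpoint relation, which makes the cancelling role of $-\frac{\tau^2}{4}B$ more transparent.
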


    \begin{proof}
        The proof can be found in \cref{sec:proof_Boris}.
    \end{proof}

    %%%%%%%%%%%%%%%%%%%%%%%%%%%%%%
    \subsection{Symplectic time integration}\label{sec:int_symp}
    To ensure exact conservation of the matrix invariants,
    we want to design explicit, symplectic integration schemes of arbitrary high order. We work with the canonical parameter coordinates $(\qB,\pB)$ and the variational Hamiltonian $\hB(t,\qB,\pB)$ defined in \eqref{equation: effective hamiltonian}. 
    If the potentials $A$ and $V$ are explicitly time-dependent, then we first apply a commutator-free Magnus integrator of the desired order.
    The resulting flow is a concatenation of the flows of weighted, time-averages of $\hB(t,\cdot)$, e.g. mid-point approximation for order two or \cite[Thm. 3.1]{BM2001} for order four. Thus, we can reduce our investigation here to the case of time-independent potentials. We write the variational Hamiltonian \eqref{equation: effective hamiltonian} as the sum 
    \[
    \hB  = \hB _\mathrm{kin} +  \hB _\mathrm{pot} +  \hB _\mathrm{mag}
    \]
    with $\hB _\mathrm{kin} = \frac{1}{2} \pB ^\top \pB$, $\hB _\mathrm{pot} = \VB(\qB)$, and 
        $\hB _\mathrm{mag} = -\AB(\qB) ^\top \pB$. Now, the equations for $\hB_\mathrm{kin}$ and $\hB_\mathrm{pot}$ can be integrated exactly, $\dot{\qB } = \pB,\, \dot{\pB } = 0 \ \text{ and }\  \dot{\qB } = 0,\, \dot{\pB } = -\nabla_{\qB} \VB(\qB),$ and the difficulty only lies in the integration of the equations for the non-separable Hamiltonian $\hB _\mathrm{mag}$,
    \begin{align}\label{equation: magnetic hamiltonian evolution}
        \dot{\qB} &= -\AB(\qB) , \quad
        \dot{\pB} = J_{\AB}^\top (\qB)  \pB.
    \end{align}
    For this task, we propose a partitioned Runge--Kutta method of the following form. We consider an explicit $s$-stage Runge--Kutta method $(L,b)$ of order $\nu$ with weights $b_i\neq 0$ and define a second Butcher tableau $(\hat L,b)$ with
        \begin{align}\label{equation: symplectic partition}
            \hat{L} := \mathbf{1}_s b^\top-\mathrm{diag}(b)^{-1}L^\top\mathrm{diag}(b),
        \end{align} 
    where we write $\mathbf{1}_s$ for the $s$-dimensional vector of ones and $\mathrm{diag}(b)$ for the diagonal matrix with diagonal $b$. Then, 
    \begin{equation}\label{eq:PRK}
    \qB_{n+1} = \qB + \stepsize \sum_{i=1}^s b_i \qB_{n+1}^{(i)},\quad \pB_{n+1} = \pB + \stepsize \sum_{i=1}^s b_i \pB_{n+1}^{(i)}
    \end{equation}
    with stages    
    \begin{align}\label{equation: PRK stages}
        \qB_{n+1}^{(i)} = - \AB \hspace{-1.4mm}\left(\qB_{n}  + \stepsize \sum_{j=1}^{i-1} l_{ij}\qB_{n}^{(j)}\right),\quad
        \pB_{n+1}^{(i)} = M_n^{(i)} \left(\pB_{n}  + \stepsize\sum_{j=1}^s \hat{l}_{ij}\pB_{n}^{(j)}\right),
    \end{align}
    where 
    \begin{align}\label{equation: RK rhs}
        M_n^{(i)} := J_{\AB}^\top \hspace{-1.4mm}\left(\qB_{n}  + \stepsize \sum_{j=1}^{i-1} l_{ij}\qB_{n}^{(j)}\right)\in\mathbb R^{D\times D}.
    \end{align}

\begin{remark}\label{rem:quad}
A partitioned Runge--Kutta method $(L,b)$, $(\hat L,\hat b)$ with $b=\hat b$ conserves quadratic invariants if and only if 
$b_i \hat L_{ij}  + \hat b_j L_{ji} = b_i\hat b_j$ for all $i,j$, see for example \cite[Thm. IV.2.4]{HLW2006}. 
This condition has motivated our choice of $\hat L$ in \eqref{equation: symplectic partition}.
\end{remark}
    
   We denote by $M_n\in\mathbb R^{sD\times sD}$ the block-diagonal matrix with diagonal blocks $M_n^{(i)}$ and write the $s$ momentum stages $\pB_{n}^{(i)}$ vectorized as $k_n = M_n(\mathbf{1}_s \otimes \Id_{D})\pB_n + \stepsize M_n (\hat{L} \otimes \Id_{D}) k_n$. Then, $\pB_{n+1} = \hat{R}(\stepsize M_n)\pB_n$ with
        \begin{align}\label{eq:stability}
            \hat{R}(\stepsize M_n) = 
            \Id_{D} + \stepsize (b^\top\otimes \Id_{D}) W_n^{-1}M_n(\mathbf{1}_s \otimes \Id_{D}),
        \end{align}
        if $W_n = \Id_{sD}-\stepsize M_n (\hat{L} \otimes \Id_D)$ is invertible. Next, we reformulate this momentum one-step map to avoid the inverse of the $sD\times sD$ matrix $W_n$, thereby making both the analysis and the explicit implementation of the integrator easier and cheaper.
     
    \begin{proposition}[Momentum one-step map]\label{prop:stability_function}
        Let $L\in\mathbb R^{s\times s}$ be a strictly lower triangular matrix and $b\in\mathbb R^s$ such that $b_i\neq0$ for all $i$. Then, 
        the generalized stability function 
        \[
        \hat{R}(\stepsize M_n) = 
            \Id_{D} + (b^\top\otimes \Id_{D}) W_n^{-1}\stepsize M_n(\mathbf{1}_s \otimes \Id_{D})
            \] 
        defined in \eqref{eq:stability} 
        can be re-expressed as
        \begin{align*}
            \hat{R}(\stepsize M_n)&= \left(\Id_D + \rho\!\left(\stepsize M_n^{(1)},\ldots,\stepsize M_n^{(s)}\right)\right)^{-1},
        \end{align*}
        if the inverse exists. Here, $\rho$ is a multi-variate polynomial such that for all $x\in\mathbb R^s$
        \[
        \rho\left(\stepsize x_1,\ldots,\stepsize x_s\right) = \sum_{k=1}^s (-\stepsize)^k \left(\sum_{1\le j_1<\ldots<j_k\le s} x_{j_1} \cdots x_{j_k} b_{j_k}\prod_{\ell=1}^{k-1}L_{j_{\ell+1} j_\ell}\right).
        \]
    \end{proposition}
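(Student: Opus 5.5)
The plan is to avoid inverting $W_n$ and instead eliminate the stages by a backward recursion. The key observation is that the shift $\mathbf 1_s b^\top$ in $\hat L$ reproduces the final momentum $\pB_{n+1}$ inside every stage. Write $P_i := \pB_n + \stepsize\sum_{j}\hat l_{ij}\pB_n^{(j)}$ for the argument of the $i$-th momentum stage, so that $\pB_n^{(i)} = M_n^{(i)} P_i$ by \eqref{equation: PRK stages}. By \eqref{equation: symplectic partition},
\[
\hat l_{ij} = b_j - b_i^{-1} L_{ji} b_j .
\]
Since $L$ is strictly lower triangular, $L_{ji}\neq 0$ only for $j>i$. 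Using \eqref{eq:PRK}, this gives
\[
P_i = \pB_{n+1} - \frac{\stepsize}{b_i}\sum_{j>i} L_{ji}\, b_j\, M_n^{(j)} P_j ,\qquad i=1,\ldots,s .
\]
This system is triangular in the reverse direction: $P_s=\pB_{n+1}$, and each $P_i$ is determined by $P_{i+1},\ldots,P_s$. This is where $b_i\neq0$ enters.

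First, I will unroll this recursion. By downward induction on $i$, I will show that $b_i P_i = \Pi_i\,\pB_{n+1}$, where $\Pi_i$ is a matrix polynomial. It is a sum over increasing chains $i=j_1<j_2<\dots<j_k$ of
\[
(-\stepsize)^{k-1}\, M_n^{(j_2)}\cdots M_n^{(j_k)}\, b_{j_k}\prod_{\ell=1}^{k-1}L_{j_{\ell+1}j_\ell},
\]
where the chain $k=1$ contributes $b_i$. Second, I will substitute this into the update $\pB_n = \pB_{n+1} - \stepsize\sum_i b_i M_n^{(i)} P_i$, which is just \eqref{eq:PRK} read backwards. Prepending $-\stepsize M_n^{(j_1)}$ to each chain yields exactly
\[
\pB_n = \bigl(\Id_D + \rho(\stepsize M_n^{(1)},\ldots,\stepsize M_n^{(s)})\bigr)\pB_{n+1}.
\]
Here $\rho$ is the stated polynomial, with the matrix factors taken in increasing index order $M^{(j_1)}\cdots M^{(j_k)}$. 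The scalar formula in the statement records these coefficients.

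Third, I will reconcile this with the definition \eqref{eq:stability}. When $W_n$ is invertible, the stage vector $k_n$ is unique and $\pB_{n+1}=\hat R(\stepsize M_n)\pB_n$. The computation above then shows $(\Id_D+\rho)\hat R(\stepsize M_n)=\Id_D$, so $\hat R = (\Id_D+\rho)^{-1}$ whenever that inverse exists.

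For full rigor I would also show that invertibility of $W_n$ is equivalent to invertibility of $\Id_D+\rho$. Conversely, if $\Id_D+\rho$ is invertible, the recursion constructs a solution of the stage equations for every $\pB_n$. A Schur-complement or determinant identity, $\det W_n = \det(\Id_D+\rho)$ up to the triangular structure, should settle this.

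The main obstacle is bookkeeping rather than any conceptual difficulty. Specifically, I need to track the coefficients $b_{j_k}$, the ratios $b_j/b_i$ that telescope along a chain, and the ordering of the noncommuting matrices $M_n^{(j)}$. I would first check the cases $s=1,2$ by hand to fix signs and the placement of $b_{j_k}$, and then carry out the induction.
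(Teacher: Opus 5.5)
Your proof is correct. It rests on the same two structural facts as the paper's proof: the shift $\mathbf 1_s b^\top$ in $\hat L$ is a rank-$D$ term, and the remainder $\mathrm{diag}(b)^{-1}L^\top\mathrm{diag}(b)$ is strictly upper triangular. You organize the argument differently, though.

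The paper works with matrices:
\begin{itemize}
\item it conjugates $W_n$ by $\mathrm{diag}(b)\otimes\Id_D$;
\item it splits off the rank-$D$ part from $U_n=\Id_{sD}+\stepsize M_n(L^\top\otimes\Id_D)$ by a Woodbury/Schur-type elimination;
\item it inverts the unipotent $U_n$ by a finite Neumann sum and then expands the chains by induction.
\end{itemize}

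You work with the stage arguments and treat $\pB_{n+1}$ as the unknown. The rank-$D$ term becomes your observation that every $P_i$ contains $\pB_{n+1}$; the paper's auxiliary quantity $z$ is exactly $\pB_{n+1}-\pB_n$. The Neumann sum becomes backward substitution, and the ratios $b_j/b_i$ telescope in $b_iP_i$ instead of being removed by a similarity transform.

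Your route is more elementary. It also shows directly that $\pB_n$ is an explicit polynomial in $\pB_{n+1}$, so the partner scheme runs explicitly backward, which is what the implementation exploits.

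It also settles invertibility more cleanly than the paper, which only writes ``provided the inverse exists''. The identity $(\Id_D+\rho)\hat R(\stepsize M_n)=\Id_D$ already forces $\Id_D+\rho$ to be invertible whenever $W_n$ is. For the converse, your backward construction works. Alternatively, the matrix determinant lemma together with $\det U_n=1$ gives $\det W_n=\det(\Id_D+\rho)$ exactly, with no qualification ``up to the triangular structure''.

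The paper's matrix formulation gives, before any expansion, the compact intermediate closed form $\hat R(\stepsize M_n)=\bigl(\Id_D-(\mathbf 1_s^\top\otimes\Id_D)U_n^{-1}\stepsize M_n(b\otimes\Id_D)\bigr)^{-1}$.

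You are right that the scalar formula for $\rho$ must be read with the noncommuting factors in increasing index order $M_n^{(j_1)}\cdots M_n^{(j_k)}$. The paper's induction produces the same ordering.
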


    \begin{proof}
    The proof can be found in \cref{section: explicit implementation}.
    \end{proof}

Owing to the above reformulation of the stability function, we can now directly establish the well-definedness and the order of the partitioned Runge--Kutta  
method.
   
    \begin{theorem}[Magnetic discretization]\label{theo:mag_disc}
        We consider an $s$-stage explicit Runge--Kutta method $(L,b)$ of order $\nu$ with $b_i\neq 0$ for all $i=1,\ldots,s$,  
        and apply the partitioned method with Butcher tableau $(L,b)$, $(\hat L,b)$ to the evolution of $\hB_\mathrm{mag}$. 
        If the numerical trajectory stays in a compact set $K$, then there is a constant $\stepsize_0 > 0$ that depends only on $L, b, K$ and the magnetic potential $A$ such that for step sizes $\stepsize \le \stepsize_0$, the integrator is well-defined, explicit, symplectic, and of order $\nu$.
    \end{theorem}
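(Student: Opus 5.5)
We prove the four claims (well-defined, explicit, symplectic, order $\nu$) in the order in which they depend on each other.

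\emph{Explicitness and well-definedness.} Since $L$ is strictly lower triangular, the position stages $\qB_{n+1}^{(i)}$ in \eqref{equation: PRK stages} are computed recursively and explicitly. Consequently the matrices $M_n^{(i)}$ in \eqref{equation: RK rhs} are available before any momentum stage is formed. The momentum stages are coupled through the full matrix $\hat L$, so they solve a linear system with matrix $W_n$.

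By \cref{prop:stability_function}, $\pB_{n+1}=(\Id_D+\rho(\stepsize M_n^{(1)},\ldots,\stepsize M_n^{(s)}))^{-1}\pB_n$. Only one $D\times D$ linear solve is therefore needed, and the momentum update is explicit in the sense of requiring no nonlinear iteration. For well-definedness we need $W_n$ and $\Id_D+\rho$ to be invertible. The numerical trajectory stays in the compact set $K$, and Assumption~\ref{assumption: potentials}(b) together with \cref{proposition: global well-posedness} makes $J_{\AB}$ continuous. Hence the stage arguments lie in a slightly enlarged compact set for $\stepsize\le1$, and $\norm{M_n^{(i)}}\le C_K$ there.

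Since $\rho$ has no constant term, $\norm{\rho}\le c(L,b)\,\stepsize C_K$. A Neumann series then gives invertibility of $\Id_D+\rho$ once $\stepsize c(L,b)C_K<1$. The same bound applies to $W_n=\Id-\stepsize M_n(\hat L\otimes\Id_D)$. This fixes $\stepsize_0$ depending only on $L$, $b$, $K$ and $A$.

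\emph{Symplecticity.} Symplecticity follows from conservation of quadratic invariants. The tableau \eqref{equation: symplectic partition} gives
\[
b_i\hat l_{ij}+b_jl_{ji}=b_ib_j-b_jl_{ji}+b_jl_{ji}=b_ib_j .
\]
This is exactly the condition of \cref{rem:quad}, i.e.\ the standard symplecticity condition for partitioned Runge--Kutta methods (\cite[Thm.~VI.4.6]{HLW2006}). Applied to the Hamiltonian system \eqref{equation: magnetic hamiltonian evolution}, the method is therefore symplectic wherever it is well defined. One point needs care here: these theorems assume uniquely solvable stages, and that is guaranteed by the previous step.

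\emph{Order.} This is the main obstacle: the momentum tableau $\hat L$ is not explicit, and we must show that the partitioned pair inherits order $\nu$ from $(L,b)$. I would use the standard result (\cite[Thm.~III.2.? / Lemma on symplectic PRK, HLW2006]{HLW2006}) that, under the symplecticity condition, the order conditions of the partitioned method reduce to the order conditions of the $q$-tableau on trees. In particular, the adjoint construction $\hat L$ makes all bicoloured order conditions for trees with $\le\nu$ vertices follow from those of $(L,b)$.

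As a backup, I would use the structure of \eqref{equation: magnetic hamiltonian evolution} directly. The $\qB$-equation is autonomous and solved by the explicit method $(L,b)$ of order $\nu$. The $\pB$-equation is linear, $\dot\pB=J_{\AB}^\top(\qB(t))\pB$, and is the adjoint of the variational equation of the $\qB$-flow. Its exact solution is $\pB(t)=(\partial\qB(t)/\partial\qB_0)^{-\top}\pB_0$. The numerical momentum update equals the inverse transpose of the derivative of the $\qB$-update: this is what conservation of $\pB^\top\delta\qB$, guaranteed by the quadratic-invariant property, encodes. Since the derivative of an order-$\nu$ numerical flow approximates the derivative of the exact flow to order $\nu$ uniformly on compacts (by smoothness in $\stepsize$), the momentum error is $\mathcal O(\stepsize^{\nu+1})$ per step as well.
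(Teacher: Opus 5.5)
Your primary order argument rests on a result that does not hold. Symplecticity does not reduce the order conditions of the pair $(L,b)$, $(\hat L,b)$ to those of $(L,b)$. The symplectic relations VI.(7.11)--(7.12) of Hairer--Lubich--Wanner only reduce them to one condition per \emph{bicoloured} free tree.

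A concrete counterexample is Ralston's third-order method, with $l_{21}=\frac12$, $l_{31}=0$, $l_{32}=\frac34$ and $b=(\frac29,\frac13,\frac49)$. Here $\hat c_i=1-b_i^{-1}\sum_j b_j l_{ji}$ gives $\hat c=(\frac14,0,1)$, and therefore $\sum_i b_i\hat c_i^2=\frac{11}{24}\neq\frac13$. This violates the condition for the tree with a white root and two black leaves. Since $b=\hat b$, it also violates the condition for the black bushy tree, so $(\hat L,b)$ alone is not even of order three. Consequently the partitioned pair is only of order two for a general non-separable Hamiltonian.

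The theorem holds only because of the special structure $\dot\qB=f(\qB)$, $\dot\pB=-f'(\qB)^\top\pB$ with $f=-\AB$. The paper uses this structure to discard every tree in which a white vertex has a black child or a black vertex has two black children. It then reduces the remaining trees, black chains with white side branches, to mono-white trees. This is an induction on the number of black vertices: recolour the root using (7.12), then split $u\circ v$ using (7.11).

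Your backup argument does use the structure, and it is correct. Make it the main proof and spell it out as follows.
\begin{enumerate}
\item Apply the pair to the augmented system with $\frac{d}{dt}\delta\qB=f'(\qB)\delta\qB$, integrated by $(L,b)$.
\item Runge--Kutta methods commute with linearization, so $\delta\qB_{n+1}=\partial_{\qB}\Phi_\tau(\qB_n)\,\delta\qB_n$, where $\Phi_\tau$ is the explicit map of $(L,b)$.
\item The $\pB$-stages do not involve $\delta\qB$. The condition of \cref{rem:quad} conserves the bilinear invariant $\pB^\top\delta\qB$. Hence $\pB_{n+1}=(\partial_{\qB}\Phi_\tau(\qB_n))^{-\top}\pB_n$, where $\partial_{\qB}\Phi_\tau=\Id+O(\tau)$ is invertible.
\item The exact flow $\varphi_\tau$ satisfies the same relation.
\item For $k\le\nu$, $\partial_\tau^k(\Phi_\tau-\varphi_\tau)|_{\tau=0}$ vanishes identically in $\qB$. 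Therefore the $\qB$-derivative of the local error is $O(\tau^{\nu+1})$ uniformly on compacts, and so is the momentum local error.
\end{enumerate}

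Compared with the paper's tree induction, the two routes buy different things. The paper stays inside B-series theory and shows exactly which bicoloured conditions are needed. Your route is tree-free and explains why the order is inherited: the method is the cotangent lift (discrete adjoint) of the explicit map. It also yields more. Expanding the nested stage derivatives gives $\Id+\rho(\tau M_n^{(1)},\ldots,\tau M_n^{(s)})=(\partial_{\qB}\Phi_\tau(\qB_n))^\top$. This makes the formula of \cref{prop:stability_function} and the explicit momentum update transparent; for instance, it reproduces the Heun update of \cref{sec:second}.

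Your treatment of well-definedness, explicitness and symplecticity is fine and matches the paper. You are in fact more careful than the paper about stage arguments leaving $K$ and about invertibility of $W_n$.
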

    
    \begin{proof}
    The proof can be found in \cref{section: proofs magnetic integrator}.
    \end{proof}

    The following corollary follows immediately, since a splitting method inherits well-posedness, symplecticity, and order from its sub-steps.
    
    \begin{corollary}[Full magnetic discretization]\label{theorem: symplectic integrator}
        Suppose we are given a splitting scheme of order $\nu$, an explicit $s$-stage Runge--Kutta method $(L,b)$ of order $\nu$ with weights $b_i\neq 0$ for all $i=1,\ldots,s$. We apply
        the partitioned Runge--Kutta method with Butcher tableau $(L,b)$, $(\hat L,b)$ to the evolution of $\hB _\mathrm{mag}$ and consider the resulting splitting integrator for $\hB = \hB _\mathrm{kin} +  \hB _\mathrm{pot} +  \hB _\mathrm{mag}$.
        If the numerical trajectory stays in a compact set $K$, then there is a constant $\stepsize_0 > 0$ that depends only on $L, b, K$ and $A, V$ such that for step sizes $\stepsize \le \stepsize_0$, the integrator is well-defined, explicit, symplectic and of order $\nu$. 
    \end{corollary}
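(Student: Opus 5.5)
The plan is to treat the splitting integrator as a composition of sub-flows and verify each claimed property sub-step by sub-step. Write the splitting scheme of order $\nu$ as
\[
\Phi_\stepsize = \prod_{m=1}^{r} \Phi^{(m)}_{c_m\stepsize},
\]
where each factor is one of the following: the exact flow of $\hB_\mathrm{kin}$, the exact flow of $\hB_\mathrm{pot}$, or the partitioned Runge--Kutta map $\Psi^{\mathrm{mag}}_{c_m\stepsize}$ of \cref{theo:mag_disc}, with coefficients $c_m$ fixed by the splitting scheme.

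\emph{Explicitness and well-definedness.} The kinetic and potential flows are explicit and globally defined: $\qB\mapsto\qB+t\pB$ and $\pB\mapsto\pB-t\nabla_{\qB}\VB(\qB)$, where $\VB$ is smooth by \cref{proposition: derivative average}. For the magnetic sub-steps, all intermediate points of the composition lie in the compact set $K$ by hypothesis. I would apply \cref{theo:mag_disc} with step size $\abs{c_m}\stepsize$ on $K$; its step-size threshold yields
\[
\stepsize_0 = \min_m \stepsize_0^{\mathrm{mag}}/\abs{c_m},
\]
which depends only on $L$, $b$, $K$, $A$ and the splitting coefficients. If negative coefficients occur, I would note that the bound in \cref{theo:mag_disc} comes from invertibility of $\Id_D+\rho(\cdot)$ in \cref{prop:stability_function}, which depends only on the size of $\stepsize M_n^{(i)}$, so negative steps are covered by the same argument. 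Explicitness of the magnetic step then follows from \cref{prop:stability_function}.

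\emph{Symplecticity.} Exact Hamiltonian flows are symplectic, the PRK map is symplectic by \cref{theo:mag_disc}, and compositions of symplectic maps are symplectic. Hence $\Phi_\stepsize$ is symplectic.

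\emph{Order.} This is the main step. Let $\Phi^{\mathrm{ex}}_\stepsize$ denote the composition of the exact sub-flows. By the hypothesis on the splitting scheme,
\[
\Phi^{\mathrm{ex}}_\stepsize = \varphi^{\hB}_\stepsize + O(\stepsize^{\nu+1}).
\]
Replacing each exact magnetic flow by $\Psi^{\mathrm{mag}}_{c_m\stepsize}$ introduces a local error of size $O(\stepsize^{\nu+1})$ by \cref{theo:mag_disc}, with constants uniform on $K$. The neighbouring sub-steps are smooth maps with Lipschitz constants $1+O(\stepsize)$ on a neighbourhood of $K$. A telescoping argument over the finitely many factors, using smoothness of the flows on a compact neighbourhood of $K$, therefore keeps the total perturbation at $O(\stepsize^{\nu+1})$. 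This gives order $\nu$.

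The obstacle I expect is ensuring the magnetic stages, and the exact-flow comparison points, remain in a compact region on which the constants of \cref{theo:mag_disc} apply. I would handle this by enlarging $K$ to a compact neighbourhood, and shrinking $\stepsize_0$ if necessary, using continuity of all sub-maps.
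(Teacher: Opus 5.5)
Your proposal is correct and follows the paper's route. The paper simply notes that a splitting method inherits well-posedness, explicitness, symplecticity and order from its sub-steps: the exact kinetic and potential flows and the partitioned Runge--Kutta magnetic step of \cref{theo:mag_disc}. Your write-up supplies the details the paper leaves implicit, and they are sound: a threshold $\tau_0$ scaled by the splitting coefficients (including negative ones), an enlarged compact neighbourhood of $K$ for stages and comparison points, and the telescoping local-error argument for the order.
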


    \begin{corollary}[Conservation properties] \label{cor:conserve} The method of \cref{theorem: symplectic integrator}
        conserves the  symplecticity condition \eqref{equation: symplecticity} for the wave packet's width matrix; if the potentials $A$ and $V$ are symmetric with respect to translation or rotation, then it also conserves the linear or semiclassical angular momentum.         
    \end{corollary}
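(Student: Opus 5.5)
Since a composition of maps preserves every invariant that each factor preserves, I will reduce the claim to the three sub-flows of the splitting. The symplecticity condition \eqref{equation: symplecticity} is equivalent to the mixed quadratic invariants $\qB^\top I_k\pB = \mathrm{vec}(\Omega)_k$ of \eqref{equation: quadratic}. The semiclassical angular momentum is likewise a mixed quadratic form $\pB^\top \mathcal K\qB$, with $\mathcal K$ the block-diagonal matrix built from $K$ and $\Id_d\otimes K$. Linear momentum is a linear function of $\pB$. So for each of $\hB_\mathrm{kin}$, $\hB_\mathrm{pot}$ and $\hB_\mathrm{mag}$ I must show two things: the exact flow, or its partitioned Runge--Kutta discretization, conserves these functionals whenever they are first integrals of that sub-Hamiltonian; and they are indeed first integrals under the stated hypotheses.

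\textbf{Step 1: invariants of the continuous sub-flows.} The width invariants \eqref{equation: quadratic} are conserved by the exact flow of any Hamiltonian of the form $\langle a\rangle_u$. I will verify this directly from \cref{proposition: derivative average}. Writing $\nabla_{\Yeps}\langle a\rangle_u = \langle \nabla^2 a\rangle_u \Yeps$, Hamilton's equations give $\dot\Yeps = \Omega_d\langle\nabla^2 a\rangle_u\Yeps$, a linear Hamiltonian matrix flow. This preserves $\Yeps^\top\Omega_d\Yeps$, and each of $\hB_\mathrm{kin}$, $\hB_\mathrm{pot}$, $\hB_\mathrm{mag}$ is such an average, of $\tfrac12|\xi|^2$, of $\tfrac12|A|^2+V$, and of $-\xi^\top A$ respectively. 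For linear and angular momentum I use the symmetry. Translation invariance of $A,V$ in a direction makes $\VB$ and $\AB$ equivariant, so the corresponding linear functional of $\pB$ Poisson-commutes with each piece. For rotation invariance I check that $\{L_\varepsilon(K),\hB_\bullet\}=0$ for each piece, since each piece is separately invariant under the lifted rotation $\qB\mapsto e^{s\mathcal K}\qB$, $\pB\mapsto e^{s\mathcal K}\pB$. Then $\hB_\mathrm{kin}=\frac12\pB^\top\pB$ is trivially invariant, and $\VB$ and $\AB^\top\pB$ inherit invariance from $A,V$ through the Gaussian average.

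\textbf{Step 2: exact sub-flows.} The $\hB_\mathrm{kin}$ and $\hB_\mathrm{pot}$ steps are exact flows, so Step 1 gives conservation directly.

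\textbf{Step 3: the magnetic partitioned Runge--Kutta step.} I apply \cref{rem:quad}. The choice \eqref{equation: symplectic partition} gives $b_i\hat L_{ij}+b_jL_{ji}=b_ib_j$, so the method conserves every quadratic invariant of the form $\qB^\top C\pB$ that is a first integral of \eqref{equation: magnetic hamiltonian evolution}. By Step 1 this covers \eqref{equation: quadratic} and $L_\varepsilon(K)$. Linear invariants are conserved by every Runge--Kutta method, partitioned or not, so linear momentum follows as well. Composing the steps then gives the corollary.

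The main obstacle is Step 1 for the magnetic piece. There I must verify concretely that $\qB^\top I_k\pB$ and $\pB^\top\mathcal K\qB$ are invariants of the non-separable flow \eqref{equation: magnetic hamiltonian evolution}. I would compute the derivative directly: it equals $-\AB^\top I_k\pB + \qB^\top I_k J_{\AB}^\top\pB$. I expect this to vanish by the structure \eqref{eq:pot_mag}, using $\AB$'s blocks $\langle J_A\rangle_u\Re Q$, $\langle J_A\rangle_u\Im Q$ together with the formula for $\nabla_{\Yeps}\langle A\rangle_u$. I will cross-check this against the equivariance argument above.
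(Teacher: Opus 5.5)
Your proposal is correct and follows essentially the same route as the paper. You decompose into the kinetic, potential and magnetic sub-steps, use exactness of the first two flows, and invoke the quadratic-invariant condition of \cref{rem:quad} together with conservation of linear invariants for the magnetic partitioned Runge--Kutta step. Your Step~1 goes further than the paper: you show via \cref{proposition: derivative average} that every averaged Hamiltonian moves the width block by $\dot{\Yeps}=\Omega_d\langle\nabla^2 a\rangle_u\Yeps$, and that each piece is invariant under the lifted rotation, which makes explicit the first-integral property of the magnetic sub-flow that the paper takes for granted. The only slip is the claim that every Runge--Kutta method conserves linear invariants: a partitioned method does so only when $b=\hat b$ or the invariant involves a single partition, and both conditions hold here.
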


    \begin{proof}
        The proof can be found in \cref{section: proofs magnetic integrator}.
    \end{proof}

    %%%%%%%%%%%%%%%%%%%%%%%%%%%%%%%%%%%%%%%%%%%%%%%%%%%%%%%%%%%%%%%%%%%%%%%%%%%%%%%%%%%%%%%%%%%%%%%%%%%%
    \subsection{Accuracy analysis and near-conservation of the energy}\label{section: error analysis}
    
    We consider a parameter trajectory $(\zB_n)_{n\le n^*} = (\qB_n, \pB_n)_{n\le n^*}$ obtained from a symplectic, order $\nu$ integrator that preserves the symplecticity condition \eqref{equation: symplecticity}, possibly the integrator of \cref{theorem: symplectic integrator}. Furthermore, we assume that the corresponding phases $(S_n)_{n\le n^*}$ are obtained with order $\nu$ by integrating equation \eqref{equation: phase de}. Then, the corresponding wave packet satisfies at times $t_n = n\stepsize$
    \begin{align*}
            \norm{u(\zB_n, S_n) - u(\zB(t_n), S(t_n))}_2 \le C\,\frac{\stepsize^\nu}{\varepsilon} \quad \text{ for }n\le n^*.
    \end{align*}
    The constant $C>0$ may depend exponentially on $t_{n^*}$, but is independent of $n, \stepsize$ and the semiclassical parameter $\varepsilon$, 
    see \cite[Thm. 7.7]{LL2020} and apply literally the same proof. As for observable accuracy we slightly extend the result of \cite[Thm. 7.7]{LL2020}.

    \begin{theorem}[Observable accuracy]\label{theorem: accuracy analysis}
        If $\mathcal A = \mathrm{op}(a)$ is an observable with smooth symbol $a$ that is globally Lipschitz with constant $L_a>0$, then the error of the approximate average is bounded by 
        \begin{align*}
            \abs{\langle a\rangle_{u}(\zB_n) - \langle a\rangle_{u}(\zB(t_n))} \le L_a \widetilde C \stepsize^\nu \quad \text{ for }n\le n^*.
        \end{align*}
        The constant $\widetilde C>0$ may depend exponentially on $t_{n^*}$, but is otherwise independent of $n, \stepsize$ and $\varepsilon$.
    \end{theorem}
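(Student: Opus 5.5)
We work with the Gaussian-integral representation \eqref{equation: weyl calculus}, which writes the average as
\[
\langle a\rangle_u = (2\pi)^{-d}\int_{\mathbb R^{2d}} a(\Yeps\zeta+z)\,e^{-\zeta^\top\zeta/2}\,d\zeta .
\]
Here $(z,\Yeps)$ is exactly the vectorized parameter $\zB=(\qB,\pB)$ from \eqref{eq:coord}. For two parameter points $\zB=(z,\Yeps)$ and $\zB'=(z',\Yeps')$ we subtract the integrands and use the global Lipschitz bound on $a$. This gives
\[
\abs{\langle a\rangle_u(\zB)-\langle a\rangle_u(\zB')} \le L_a (2\pi)^{-d}\int_{\mathbb R^{2d}} \bigl(\norm{z-z'}_2 + \norm{(\Yeps-\Yeps')\zeta}_2\bigr)e^{-\zeta^\top\zeta/2}\,d\zeta .
\]
The Gaussian moment $\int \norm{X\zeta}_2\, e^{-\zeta^\top\zeta/2}\,d\zeta$ is bounded by $(2\pi)^d$ times the Frobenius norm of $X$, by Cauchy--Schwarz with second moments. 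Hence the averaged observable is globally Lipschitz in the canonical coordinates:
\[
\abs{\langle a\rangle_u(\zB)-\langle a\rangle_u(\zB')}\le c_d\, L_a\,\norm{\zB-\zB'}_2 ,
\]
with $c_d$ depending only on the dimension. The essential point is that no $\varepsilon^{-1}$ appears. The $\scs$ scaling is already absorbed into $\zB$, so the Lipschitz estimate is uniform in $\varepsilon$. This step also requires no smoothness of $a$ beyond the Lipschitz property.

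It remains to bound the global error in the canonical parameters, $\norm{\zB_n-\zB(t_n)}_2\le \widetilde C\stepsize^\nu$, with $\widetilde C$ independent of $\varepsilon$. We use the standard argument of Lady Windermere's fan for a one-step method of order $\nu$: we sum the local errors and propagate them with the exact flow of \eqref{equation: equations of motion vectorized}. The flow is Lipschitz with constant $e^{Lt}$ by Gronwall, which is where the exponential dependence on $t_{n^*}$ enters. The local error is bounded by derivatives of $\hB$ up to order $\nu+1$ along the trajectory and by the Lipschitz constants of the sub-step maps. For the integrator of \cref{theorem: symplectic integrator}, these are controlled on the compact set $K$ containing the numerical trajectory.

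The main obstacle is showing that these derivative bounds are $\varepsilon$-independent. Here we invoke \cref{proposition: derivative average} repeatedly. Derivatives of $\hB$ with respect to $z$ and $\Yeps$ are averages of derivatives of $h$, multiplied by polynomial factors in $\Yeps$. By the subquadratic growth inherited via \cref{proposition: global well-posedness}, and the fact that the averages depend on $\varepsilon$ only through $\Yeps$, all of these quantities are bounded on bounded sets of $\zB$ uniformly in $\varepsilon$. The remaining point is that the exact and numerical trajectories stay in a fixed compact set. For the exact trajectory this follows from the subquadratic growth, which gives at most exponential growth. For the numerical trajectory it follows from the standing compactness assumption on $K$, or from an induction using the error bound itself. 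Once these uniform bounds are in place, we combine the two estimates and obtain the claim with constant $c_d\widetilde C$, which we rename $\widetilde C$.
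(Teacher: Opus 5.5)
Your proposal is correct and follows the paper's proof. Both arguments use the Gaussian integral representation and the global Lipschitz bound on $a$ to show that $\zB\mapsto\langle a\rangle_u(\zB)$ is Lipschitz in the canonical coordinates with an $\varepsilon$-independent constant, and then combine this with an $\varepsilon$-uniform $\mathcal O(\tau^\nu)$ bound on the parameter error. The differences are minor: you bound the Gaussian moment via Cauchy--Schwarz and the Frobenius norm, whereas the paper uses the first absolute Gaussian moment and the spectral norm. You also sketch the Lady Windermere's fan argument for the parameter bound, which the paper simply imports from the proof of Theorem~7.7 in \cite{LL2020}.
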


    \begin{proof}
        We have $|a(\zeta)|\le |a(0)| + L_a|\zeta|$ for all $\zeta\in\mathbb R^{2d}$, so that the phase space integrals are convergent. Moreover, 
        \begin{align*}
            &\abs{\langle a\rangle_{u}(\zB_n) - \langle a\rangle_{u}(\zB(t_n))} \\& \hspace{1cm}\le (2\pi)^{-d}\int_{\mathbb{R}^{2d}}\abs{a(\Yeps_n\zeta + z_n) - a(\Yeps(t_n)\zeta + z(t_n))}\exp(-\frac{1}{2}\zeta^\top\zeta)\,d\zeta \\& \hspace{1cm}\le (2\pi)^{-d}\int_{\mathbb{R}^{2d}}L_a\left(\norm{(\Yeps_n-\Yeps(t_n))\zeta}_2 + \norm{z_n-z(t_n)}_2\right)\exp(-\frac{1}{2}\zeta^\top\zeta)\,d\zeta \\& \hspace{1cm}\le L_a \left(C_d \norm{\Yeps_n-\Yeps(t_n)}_2 + \norm{z_n-z(t_n)}_2\right) \le 
            L_a c \stepsize^\nu(C_d + 1),
        \end{align*}
        where $C_d>0$ bounds the first absolute moment of a Gaussian and depends only on the dimension.
    \end{proof}

    For a proof of near-conservation of the energy, we assume that the symbol extends to an entire function $h:\mathbb{C}^{2d} \to \mathbb{C}$ with at most exponential growth at infinity. Under these assumptions, the averaged Hamiltonian $\hB(\zB) = \langle h\rangle_{u}(\zB)$ allows the direct application of standard results for long-time energy conservation.

    \begin{theorem}\label{theorem: near-cons energy}
    Consider a time-independent Hamiltonian $H=\mathrm{op}(h)$ with entire symbol $h$, that satisfies Assumption \ref{assumption: assumption symbol}.
    Suppose that the numerical trajectory $(\zB_n)_{n\le n^*}$ given by an order $\nu$ integrator from \cref{theorem: symplectic integrator} with step size $\stepsize$ stays in a compact set $K$. Then, there exists $\stepsize_0>0$ such that 
    \begin{align}
        \abs{\langle h\rangle_u(\zB_n) - \langle h\rangle_u(\zB_0)} = \abs{\hB(\zB_n) - \hB(\zB_0)} \le C_h\stepsize^\nu
    \end{align}
    for $n\stepsize \le \exp\left(\frac{\stepsize_0}{2\stepsize}\right)$ and $n\le n^*$. The error constant $C_h > 0$, as well as $\stepsize_0$, depend only on $K$ and the growth of $h$, but are independent of $\stepsize, n^*$.
    In particular, the error does not depend on $\varepsilon$ if the numerical solution can be bounded independently of it.
    \begin{proof}
        We establish boundedness and analyticity of the average Hamiltonian $\hB(\zB)$ in \cref{sec:prooferr} and then directly apply 
        \cite[Thm. 8.1]{HLW2006}.
    \end{proof}
    \end{theorem}

%%%%%%%%%%%%%%%%%%%%%%%%%%%%%%%%%%%%%%
\section{Proofs}\label{sec:proofs}
In this section, we present the postponed proofs of our main results.

%%%%%%%%%%%%%%%%%%%%%%%%%%%%%%%%%%%%%%%%%%%%%%%%%%%%
\subsection{Derivatives of averages}\label{proof:av}
        \begin{proof}[Proof of \cref{proposition: derivative average}]
        We crucially use \eqref{equation: weyl calculus} and the growth assumptions on $a$. We perform integration by parts, 
            \begin{align*}
                \nabla_z \langle a \rangle_{u}&= (2\pi)^{-d} \int_{\mathbb{R}^{2d}} \nabla_z a(\Yeps\zeta + z) \exp\left(-\tfrac{1}{2}\zeta^\top \zeta\right)\, d\zeta\\ &= (2\pi)^{-d} \int_{\mathbb{R}^{2d}} (\nabla_\zeta a)(\Yeps\zeta + z) \exp\left(-\tfrac{1}{2}\zeta^\top\zeta\right)\, d\zeta = \langle\nabla_\zeta  a\rangle_{u}.
            \end{align*}
            Furthermore, again by integration by parts, 
            \begin{align*}
                \nabla_{\Yeps} \langle a \rangle_{u}&= (2\pi)^{-d} \int_{\mathbb{R}^{2d}} \nabla_{\Yeps} a(\Yeps\zeta + z) \exp\left(-\tfrac{1}{2}\zeta^\top \zeta\right)\, d\zeta\\ &= (2\pi)^{-d} \int_{\mathbb{R}^{2d}} (\nabla_\zeta a)(\Yeps\zeta + z) \zeta^\top  \exp\left(-\tfrac{1}{2}\zeta^\top \zeta\right)\, d\zeta \\ &= (2\pi)^{-d} \int_{\mathbb{R}^{2d}} (\nabla_\zeta a)(\Yeps\zeta + z)  \left(-\nabla_{\zeta}^\top  \exp\left(-\tfrac{1}{2}\zeta^\top \zeta\right) \right)\, d\zeta \\ &= (2\pi)^{-d} \int_{\mathbb{R}^{2d}} (\nabla_\zeta \nabla_\zeta^\top  a)(\Yeps\zeta + z)\Yeps   \exp\left(-\tfrac{1}{2}\zeta^\top \zeta\right)\, d\zeta
                = \langle \nabla_\zeta \nabla_\zeta^\top  a\rangle_u.
            \end{align*}
        \end{proof}

    %%%%%%%%%%%%%%%%%%%%%%%%%%%%%%%%%%%%%%%%%%%%%%%%%%%%%%%%%    
    \subsection{Proofs for the equations of motion}\label{sec:proofs_gen}

    \begin{proof}[Proof of \cref{theorem: symplectic evolution}]
    It was shown in \cite[Thm. 4.2]{BDHL2023} that for a general subquadratic Hamiltonian with symbol $h$ the evolution of the parameters satisfies
    \begin{align}\label{equation: equations of motion general}
        &\dot{q} = \langle \nabla_x h \rangle_u, \hspace{1cm}\dot{Q} = \langle \nabla_\xi \nabla_x^\top  h \rangle_u Q + \langle \nabla_\xi \nabla_\xi^\top  h \rangle_u P,\nonumber\\
        &\dot{p} = -\langle \nabla_\xi h \rangle_u, \hspace{.7cm}\dot{P} = -\langle \nabla_x \nabla_x^\top  h \rangle_u Q - \langle \nabla_x \nabla_\xi^\top  h \rangle_u P.
    \end{align}
    Applying the averaging identities of \cref{proposition: derivative average} to the Hamiltonian symbol $h$, we then see that the equations of motion \eqref{equation: equations of motion general} give \eqref{equation: equations of motion vectorized}.
    \end{proof}

    \begin{proof}[Proof of \cref{proposition: global well-posedness}]
Due to the smoothness of the classical symbol $h(t,\cdot)$ and the derivative identities of \cref{proposition: derivative average}, the variational Hamiltonian $\hB(t,\cdot)$ is locally Lipschitz continuous, and we have local well-posedness of the equations of 
motion~\eqref{equation: equations of motion vectorized}. Moreover, by 
\cite[Thm. 4.2]{BDHL2023}, the variational dynamics preserve symplecticity of the matrix $Y(t)$, guaranteeing a square integrable wave packet $u(t)$. Thus, for global well-posedness, it is enough to prove that the variational Hamiltonian inherits subquadratic growth from the classical symbol. 
If $h(t,\cdot)$ grows subquadratically, we consider $\mathbf k\in\mathbb N^{2D}$ with $\abs{\mathbf k}\ge 2$. We use the average formula \eqref{equation: weyl calculus} and obtain the estimate 
        \begin{align*}
            \abs{\partial_{\zB}^\mathbf{k}\hB(t,\zB)} &\le (2\pi)^{-d}\int_{\mathbb{R}^{2d}} \abs{\partial^\mathbf{k}h(t,\Yeps\zeta+z)}(1 + \norm{\zeta}_2)^\abs{\mathbf{k}}\exp\left(-\tfrac{1}{2}\zeta^\top\zeta\right)\,d\zeta \\
            &\le C_\mathbf{k}(t)C_{\mathbf{k},d},
        \end{align*}
        where $C_{\mathbf{k},d}$ bounds the $\abs{\mathbf{k}}$-th moment of the spherical Gaussian above.
    \end{proof}

\begin{proof}[Proof of \cref{corollary: equations of motion charged particle}]
   We obtain by direct calculation
    \begin{align*}
        \dot{\vB } &= \dot{\pB }-\frac{d}{dt}\AB  = (\nabla_{\qB}\AB ^\top )\pB -\nabla_{\qB}\VB -(\nabla_{\qB}\AB ^\top )^\top \dot{\qB }-\partial_t \AB  \\&= \left((\nabla_{\qB}\AB ^\top )-(\nabla_{\qB}\AB ^\top )^\top \right)\vB -\nabla_{\qB}\VB  + (\nabla_{\qB}\AB ^\top )\AB  -\partial_t \AB \\&=-\BB  \vB -\nabla_{\qB}\left(\VB -\frac{1}{2}\AB ^\top \AB \right) -\partial_t\AB.
    \end{align*}    
    
\end{proof}

    %%%%%%%%%%%%%%%%%%%%%%%%%%%%%%%%%%%%%%%%%%%%%%%%%%%%%%%%%%%%%%%%%
    \subsection{Proof for Boris numerics}\label{sec:proof_Boris}
    \begin{proof}[Proof of \cref{proposition: modified magnetic invariant}]
    We recall that with time-independent potentials and linear vector potential, the equations of motion read $\dot\qB = \vB$, 
    $\dot\vB = -\BB(\qB)\vB - \nabla_{\qB}\langle V\rangle_u$. We prove that Boris integrator corresponding to 
    \[
    \Psi_{\stepsize}^S = \Psi_{\frac{\stepsize }{2}}^\mathrm{kin}\circ \Psi_{\frac{\stepsize }{2}}^\mathrm{pot} \circ \Psi_{\stepsize}^\mathrm{mag} \circ \Psi_{\frac{\stepsize }{2}}^\mathrm{pot} \circ \Psi_{\frac{\stepsize }{2}}^\mathrm{kin}
    \] 
    leaves the modified structure matrix $\Omega_B(\tau)$ 
    invariant by first considering a kinetic half-step for $\dot\qB = \vB$, $\dot\vB = 0$. Its contribution to 
    $Y_1^\top\Omega_B(\tau)Y_1$ amounts to
\begin{align*}
            \begin{pmatrix}
                \Id & \frac{\stepsize }{2}\Id\\ 0 & \Id
            \end{pmatrix}^\top \begin{pmatrix}
                B &\Id\\-\Id & -\frac{\stepsize ^2}{4}B
            \end{pmatrix} \begin{pmatrix}
                \Id & \frac{\stepsize }{2}\Id\\ 0 & \Id
            \end{pmatrix} = \begin{pmatrix}
                B & \Id + \frac{\stepsize }{2}B \\ -\Id + \frac{\stepsize }{2}B & 0
            \end{pmatrix} =: \Omega_{\mathrm{kin}}. 
        \end{align*}
Then, we consider the potential part for $\dot\qB = 0$, $\dot\vB =- \nabla_{\qB}\langle V\rangle_u$, which reads on the 
level of the complex matrix factors $(Q,\Upsilon)$ as $\dot Q=0$, $\dot\Upsilon = -\langle \nabla^2V\rangle_uQ$. For the magnetic part, we have $\dot\qB = 0$, $\dot\vB = -\BB \vB$ and correspondingly $\dot Q = 0$, $\dot\Upsilon = -B\Upsilon$. We thus have to consider the triple matrix product
\begin{align*}
    \Phi_{\stepsize} &:= \begin{pmatrix}
        \Id & 0\\ -\frac\stepsize2\langle \nabla^2V\rangle_u & \Id
    \end{pmatrix}
    \begin{pmatrix}
        \Id & 0\\ 0 & R(\frac\stepsize2 B) 
    \end{pmatrix}
    \begin{pmatrix}
        \Id & 0\\ -\frac\stepsize2\langle \nabla^2V\rangle_u & \Id
    \end{pmatrix}\\
    &= \begin{pmatrix}
        \Id & 0\\ -\stepsize \left(\Id + \frac\stepsize2 B\right)^{-1}\langle\nabla^2V\rangle_u & R(\frac\stepsize2 B)
    \end{pmatrix},
\end{align*}
where the bottom left matrix block results from
\[
-\frac\stepsize2 \left(\Id+R\left(\frac\stepsize2 B\right)\right) \langle\nabla^2V\rangle_u = 
-\stepsize \left(\Id + \frac\stepsize2 B\right)^{-1}\langle\nabla^2V\rangle_u.
\]
We next calculate the action of this combined update on $\Omega_{\mathrm{kin}}$,
\[
\Phi_{\stepsize}^\top \Omega_{\mathrm{kin}} \Phi_{\stepsize} = \begin{pmatrix}
                B & \Id -\frac{\stepsize }{2}B\\ -\Id -\frac{\stepsize }{2}B & 0
            \end{pmatrix},
\]
using the skew-symmetry of $B$. For the concluding kinetic half-step, it then remains to observe 
        \begin{align*}
            \begin{pmatrix}
                \Id & \frac{\stepsize }{2}\Id\\ 0 & \Id
            \end{pmatrix}^\top \begin{pmatrix}
                B & \Id -\frac{\stepsize }{2}B\\ -\Id -\frac{\stepsize }{2}B & 0
            \end{pmatrix} \begin{pmatrix}
                \Id & \frac{\stepsize }{2}\Id\\ 0 & \Id
            \end{pmatrix} = \begin{pmatrix}
                B &\Id\\-\Id & -\frac{\stepsize ^2}{4}B
            \end{pmatrix}.
        \end{align*}
        \end{proof}

\subsection{Proof for the magnetic one-step map}\label{section: explicit implementation}
    
\begin{proof}[Proof of \cref{prop:stability_function}] We proceed in several steps so that the triangular form of the matrix $L$ 
allows us to bring in a Neumann sum. 

\paragraph{Step 1: Similarity transformation}
Let $S=\mathrm{diag}(b)\otimes I_D$ and note that $S$ and $M_n$ commute. Using
\[
S(\mathbf 1_s b^\top\otimes I_D)S^{-1}=(b\mathbf 1_s^\top\otimes I_D),\quad
S(\mathrm{diag}(b)^{-1}L^\top\mathrm{diag}(b)\otimes I_D)S^{-1}=(L^\top\otimes I_D),
\]
we obtain $W_n=S^{-1}\widetilde W_n S$,
where
\[
\widetilde W_n
=
I_{sD}
-\tau M_n\bigl((b\mathbf 1_s^\top-L^\top)\otimes I_D\bigr).
\]
Moreover,
$(b^\top\otimes I_D)S^{-1}=\mathbf 1_s^\top\otimes I_D$ and 
$S(\mathbf 1_s\otimes I_D)=b\otimes I_D$.
Hence
\begin{equation}\label{eq:Rreduced}
\hat R(\stepsize M_n)
=I_D+(\mathbf 1_s^\top\otimes I_D)\,\widetilde W_n^{-1}\,\stepsize M_n\,(b\otimes I_D)
\end{equation}
provided that $\widetilde W_n$ is invertible. 

\paragraph{Step 2: Triangular representation}
Using $(b\mathbf 1_s^\top)\otimes I_D=(b\otimes I_D)(\mathbf 1_s^\top\otimes I_D)$, we obtain the splitting
\[
\widetilde W_n
=
U_n-\stepsize M_n(b\otimes I_D)(\mathbf 1_s^\top\otimes I_D),
\qquad
U_n:=I_{sD}+\tau M_n(L^\top\otimes I_D).
\]
Consider now the linear system
$\widetilde W_n x = \stepsize M_n(b\otimes I_D) y$,
$y\in\mathbb R^D$,
that is, 
\[
U_n x=\stepsize M_n(b\otimes I_D)y+
\stepsize M_n(b\otimes I_D)(\mathbf 1_s^\top\otimes I_D)x.
\]
Assuming that $U_n$ is invertible, we left-multiply by $(\mathbf 1_s^\top\otimes I_D)U_n^{-1}$ to obtain
an equation for $z:=(\mathbf 1_s^\top\otimes I_D)x$, 
namely
\[
z =(\mathbf 1_s^\top\otimes I_D) U_n^{-1}\stepsize M_n(b\otimes I_D)y
+ (\mathbf 1_s^\top\otimes I_D)U_n^{-1}\stepsize M_n(b\otimes I_D)z.
\]
Provided the inverse exists, this reads
\[
z =\left(I_D-(\mathbf 1_s^\top\otimes I_D)U_n^{-1}\stepsize M_n(b\otimes I_D)\right)^{-1}
(\mathbf 1_s^\top\otimes I_D)U_n^{-1}\stepsize M_n(b\otimes I_D)y.
\]
Inserting into \eqref{eq:Rreduced} shows that
\begin{equation}\label{eq:RU}
\hat R(\stepsize M_n) =\left(I_D-(\mathbf 1_s^\top\otimes I_D)
U_n^{-1}\stepsize M_n(b\otimes I_D)\right)^{-1}.
\end{equation}

\paragraph{Step 3: Triangular splitting}
Since $L$ is strictly lower triangular, $L^\top$ is strictly upper triangular.
Because $M_n$ is block diagonal, 
$A_n := M_n(L^\top\otimes I_D)$ is strictly block upper triangular,
hence nilpotent of index at most $s$.
Therefore, the matrix $U_n$ defined in step 1 is invertible and its inverse is given by the Neumann sum
\[
U_n^{-1}=\sum_{k=0}^{s-1}(-\stepsize)^k A_n^k.
\]

\paragraph{Step 4: Explicit finite expansion}
We prove inductively, that for any $k\ge1$ and every block index $i=1,\ldots,s$,
\begin{equation}\label{eq:induc}
\bigl(A_n^{k-1}M_n(b\otimes I_D)\bigr)_i=\sum_{i=j_1<j_2<\cdots<j_k\le s}M_n^{(j_1)}\cdots M_n^{(j_k)}b_{j_k}
\prod_{\ell=1}^{k-1}L_{j_{\ell+1}j_\ell}.
\end{equation}
Then, substituting \eqref{eq:induc} into \eqref{eq:RU} proves our claim. For the inductive argument, we start with 
$k=1$ and observe that $(A_n^{0}M_n(b\otimes I_D))_i=b_i M_n^{(i)}$ for all $i$.
Then, we assume that \eqref{eq:induc} holds for some $k$ and write 
\begin{align*}
&(A_n^{k}M_n(b\otimes I_D))_i 
=\sum_{j=1}^s (A_n)_{ij}(A_n^{k-1}M_n(b\otimes I_D))_j\\
&=
\sum_{j=i+1}^s M_n^{(i)} L_{ji} \sum_{j=j_2<\cdots<j_{k+1}\le s} M_n^{(j_2)}\cdots M_n^{(j_{k+1})} b_{j_{k+1}}
\prod_{\ell=2}^{k}L_{j_{\ell+1}j_\ell},
\end{align*}
because $(A_n)_{ij}=M_n^{(i)} (L^\top)_{ij}=M_n^{(i)} L_{ji}$ and $(A_n)_{ij}=0$ unless $j>i$.
Re-indexing with $j_1=i$ then proves \eqref{eq:induc} for all $k\ge 1$.
\end{proof}

%%%%%%%%%%%%%%%%%%%%%%%%%%%%%%%
\subsection{Proofs for the symplectic integrators}\label{section: proofs magnetic integrator}

\begin{proof}[Proof \cref{theo:mag_disc}]
We proceed in three steps, addressing well-posedness, symplecticity, and order of the partitioned method for the magnetic evolution.

\paragraph{Well-posedness}
    We start by verifying that a step size restriction guarantees invertibility in \cref{prop:stability_function}. 
    We recall the definition of the magnetic parameter potential $\AB$ in \eqref{eq:pot_mag}, and note that the matrices $M_n^{(1)},\ldots,M_n^{(s)}$ are defined by evaluations of the transposed Jacobian 
    $J_{\AB}^\top = \nabla_{\qB} \AB^\top$ at different parameter positions, see \eqref{equation: RK rhs}. In general, these matrices do not commute.  
    We thus do not target a spectral estimate, but a norm bound. Arguing as in the proof of 
    \cref{proposition: global well-posedness}, we see that the derivative bounds up to order three on the vector potential 
    $A$ and a bound on the compact parameter set $K$ provide a uniform bound for the Jacobian $\nabla_{\qB}\AB$. Thus, we have a constant $C>0$ such that $\norm{M_n^{(i)}}_2 < C,\quad i\in \{1,\ldots,s\}$. We write the multi-variate polynomial  as $\rho(x) = \sum_{|\mathbf{k}|\le s} \rho_{\mathbf{k}} x^{\mathbf{k}}$, $x\in\mathbb R^s$, 
    and note that the coefficients $\rho_{\mathbf{k}}$ depend on the Runge--Kutta parameters $(L,b)$. Evaluating the polynomial on the matrices, we have
     \begin{align*}
        \norm{\rho(\stepsize M_n^{(1)}, \ldots, \stepsize M_n^{(s)})}_2&\le 
        \sum_{k=1}^s \rho_k\, C^k \stepsize^k
    \end{align*}
    with $\rho_k = \sum_{\mathbf{k}:|\mathbf{k}| = k} |\rho_\mathbf{k}|$. The uni-variate polynomial $\widetilde\rho(\stepsize) = \sum_{k=1}^s\rho_k \stepsize^k$ vanishes for $\stepsize=0$ and has nonnegative coefficients. Thus, there exists $\kappa_\rho>0$ such that $\widetilde\rho(\stepsize)<1$ for 
    $0<\stepsize<\kappa_\rho$. Setting $\stepsize_0 := \kappa_\rho/C$, we have
        \begin{align*}
            \norm{\rho(\stepsize M_n^{(1)}, \ldots, \stepsize M_n^{(s)})}_2<1
        \end{align*}
    and well-posedness of the one-step map $R(\stepsize M_n)$ for $\stepsize<\stepsize_0$.

\paragraph{Symplecticity} As already mentioned in Remark \ref{rem:quad}, the coefficients $(L,b)$, $(\hat L,b)$ satisfy the conditions for conserving quadratic invariants, which automatically renders the Runge--Kutta method symplectic, see e.g. \cite[\S VI.4.1]{HLW2006}.

\paragraph{Order} 
We consider the order conditions in terms of bi-colored rooted trees, using the notation of \cite[\S III.2]{HLW2006}. 
Since the method is symplectic, we can thus use the properties \cite[eq.~VI.(7.11) \& VI.(7.12)]{HLW2006}.
We observe, that the magnetic evolution \eqref{equation: magnetic hamiltonian evolution} is of the form 
$\dot{y} = f(y)$, $\dot{z} = g(y)z$. This structure implies that, for trees associated with a nonzero elementary differential, a white node can only have white children (as $f$ only depends on $y$), and a black node has at most one black child (as $g(y)z$ is linear in $z$), but an arbitrary number of white children. There are thus two types of trees to consider: mono-white trees and trees that consist of a chain of black nodes from a black root, where each such black node has an arbitrary number of white children.
\begin{figure}[H]
\centering
            \begin{tikzpicture}[
  scale=.7,
  level/.style={},
  every node/.style={circle, draw, fill=white, inner sep=1.5pt},
  filled/.style={circle, draw, fill=black, inner sep=1.5pt},
  edge/.style={thick},
  dottededge/.style={dotted, thick},
  subtree/.style={draw, rounded corners=16pt, thick}
]

\def\yroot{0}
\def\ylevelone{0.8}
\def\yleveltwo{1.6}
\def\ylevelthree{2.4}
\def\treeoffset{3.6}

\node[filled] (L0) at (0,\yroot) {};
\node[filled] (L1) at (0,\ylevelone) {};
\node[filled] (L2) at (0,\yleveltwo) {};

\node (L1a) at (0.6,\ylevelone) {};
\node (L2a) at (0.6,\yleveltwo) {};
\node (L3a) at (0,\ylevelthree) {};
\node (L3b) at (0.6,\ylevelthree) {};
\node (L1b) at (1.2,\ylevelone) {};
\node (L2b) at (1.5,\yleveltwo) {};

\draw[edge] (L0)--(L1);
\draw[edge] (L1)--(L2);
\draw[edge] (L0)--(L1a);
\draw[edge] (L0)--(L1b);
\draw[edge] (L1b)--(L2b);
\draw[edge] (L1)--(L2a);
\draw[edge] (L2)--(L3a);
\draw[edge] (L2)--(L3b);

\draw[->, thick] (1.9,1.4) -- (3.0,1.4);

\node (M0) at (0 + \treeoffset,\yroot) {};
\node[filled] (M1) at (0 + \treeoffset,\ylevelone) {};
\node[filled] (M2) at (0 + \treeoffset,\yleveltwo) {};

\node (M1a) at (0.6 + \treeoffset,\ylevelone) {};
\node (M2a) at (0.6 + \treeoffset,\yleveltwo) {};
\node (M3a) at (0 + \treeoffset,\ylevelthree) {};
\node (M3b) at (0.6 + \treeoffset,\ylevelthree) {};
\node (M1b) at (1.2 + \treeoffset,\ylevelone) {};
\node (M2b) at (1.5 + \treeoffset,\yleveltwo) {};

\draw[edge] (M0)--(M1);
\draw[edge] (M1)--(M2);
\draw[edge] (M0)--(M1a);
\draw[edge] (M0)--(M1b);
\draw[edge] (M1b)--(M2b);
\draw[edge] (M1)--(M2a);
\draw[edge] (M2)--(M3a);
\draw[edge] (M2)--(M3b);

\draw[thick, rotate around={75:(3.8, 1.75)}]
  (3.8, 1.75) ellipse (1.1 and 0.55);
\node[draw = none, fill = none] at (4.5,2.6) {$v$};

\draw[thick, rotate around={48:(4.4, 0.74)}]
  (4.4, 0.74) ellipse (1.36 and 0.38);
\node[draw = none, fill = none] at (5.2,0.7) {$u$};

\node[draw = none, fill = none] at (\treeoffset,-0.6) {$u\circ v$};

\draw[->, thick] (1.9 + \treeoffset,1.4) -- (3.0 + \treeoffset,1.4);

\node (R0) at (1.2 + 2*\treeoffset,\ylevelone) {};
\node[filled] (R1) at (0 + 2*\treeoffset,\yroot) {};
\node[filled] (R2) at (0 + 2*\treeoffset,\ylevelone) {};

\node (R1a) at (1.2 + 2*\treeoffset,\yleveltwo) {};
\node (R2a) at (0.6 + 2*\treeoffset,\ylevelone) {};
\node (R3a) at (0 + 2*\treeoffset,\yleveltwo) {};
\node (R3b) at (0.6 + 2*\treeoffset,\yleveltwo) {};
\node (R1b) at (1.8 + 2*\treeoffset,\yleveltwo) {};
\node (R2b) at (1.8 + 2*\treeoffset,\ylevelthree) {};

\draw[edge] (R0)--(R1);
\draw[edge] (R1)--(R2);
\draw[edge] (R0)--(R1a);
\draw[edge] (R0)--(R1b);
\draw[edge] (R1b)--(R2b);
\draw[edge] (R1)--(R2a);
\draw[edge] (R2)--(R3a);
\draw[edge] (R2)--(R3b);

\draw[thick, rotate around={90:(0.2 + 2*\treeoffset,0.1 + \ylevelone)}]
  (0.2 + 2*\treeoffset,0.1 +\ylevelone) ellipse (1.2 and 0.7);
\node[draw = none, fill = none] at (7.5,2.25) {$v$};

\draw[thick, rotate around={80:(1.5 + 2*\treeoffset,0.8 + \ylevelone)}]
  (1.5 + 2*\treeoffset,0.8 +\ylevelone) ellipse (1.2 and 0.5);
\node[draw = none, fill = none] at (9.4,2.25) {$u$};

\node[draw = none, fill = none] at (2*\treeoffset,-0.6) {$v\circ u$};

\end{tikzpicture}
\caption{Exemplary visualization of the induction step.}
\end{figure}
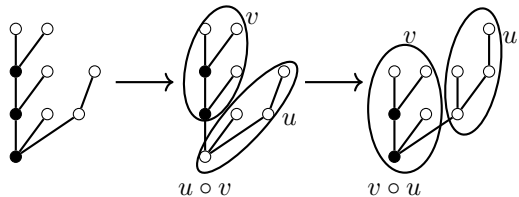

Now, consider such rooted trees with at most $\nu$ nodes; we do an induction on the number of black nodes. 
 If the tree is mono-white, the associated order condition is satisfied by the order of the explicit integrator. 
Assume that the order condition holds for trees with $n$ black nodes of the above type, by \cite[eq. VI.(7.12)]{HLW2006}, we can change the color of the root to white. The tree now can be written as $u\circ v$ with $u$ (sub-tree of the root and all its white children) mono-white and $v$ (sub-tree of the unique black child of the root); by induction, the order conditions are satisfied for $u, v$ and $v\circ u$, so by \cite[eq. VI.(7.11)]{HLW2006} they are also satisfied for $u\circ v$.
\end{proof}
    
\begin{proof}[Proof of \cref{theorem: symplectic integrator}]
  We prove that the splitting integrator conserves the quadratic invariants $\qB^\top I_k\pB$ derived in \eqref{equation: quadratic}. We consider the last $2d^2$ components of $\qB$ and $\pB$ each and collect them as a matrix $Y\in\mathbb R^{2d\times 2d}$. We visit the three sub-steps of the splitting scheme. The  $Y$-updates given by the (time-averaged) kinetic or potential Hamiltonians ($\dot{\qB } = \pB$, $\dot{\pB } = 0$ or $\dot{\qB } = 0$, $\dot{\pB } = -\nabla_{\qB} \VB(\qB)$) are of the form
            \begin{align*}
                Y^\mathrm{new} = \begin{pmatrix}
                    \Id & \stepsize \Id \\0 & \Id
                \end{pmatrix}Y,\  \text{ or } Y^\mathrm{new} = \begin{pmatrix}
                    \Id & 0 \\ -\stepsize\nabla^2_{q}V(q) & \Id
                \end{pmatrix}Y.
            \end{align*}
            Both updates leave $\Omega$ invariant, and thus conserve \eqref{equation: symplecticity} respectively \eqref{equation: quadratic}. By construction of the partitioned Runge--Kutta method, the magnetic sub-step conserves the quadratic invariant \eqref{equation: quadratic}, too, see also Remark  \ref{rem:quad}. Similar arguments holds for the linear (using \cite[Thm. IV.1.5]{HLW2006}) and the semiclassical angular momentum, 
            if the potentials have the respective symmetries.
\end{proof}

    %%%%%%%%%%%%%%%%%%%%%%%%%%%%%%%%%%%%%%%%%%%
    \subsection{Proofs for energy conservation}\label{sec:prooferr}
    For establishing \Cref{theorem: near-cons energy}, we prove that the average $\hB(\zB)$ is bounded and analytic if the parameters are bounded. 
    
    \begin{lemma}[Boundedness]\label{lem:bound} Let $h$ be of at most exponential growth, that is, there exist $c_1, c_2 > 0$ such that $\abs{h(w)}\le c_1\exp(c_2\norm{w}_2)$ for all $w\in\mathbb C^d$. Then, 
    \[
    \abs{\hB(\zB)}  \le c_1 C_d \exp\!\left(\frac{c_2^2}{2} \norm{\Yeps}_2^2 + c_2\norm{z}_2\right) \left( c_2^{2d-1}\norm{\Yeps}_2^{2d-1}+ 1\right)
    \]
    for all $\zB = (z,\Yeps)\in\mathbb C^{2D}$, where $C_d>0$ depends on the dimension $d$ only.
    \end{lemma}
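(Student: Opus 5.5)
We start from the Gaussian representation \eqref{equation: weyl calculus}, which for complex parameters $\zB=(z,\Yeps)\in\mathbb C^{2D}$ serves as the definition of the extension,
\[
\hB(\zB) = (2\pi)^{-d}\int_{\mathbb R^{2d}} h(\Yeps\zeta+z)\exp\!\left(-\tfrac12\zeta^\top\zeta\right)d\zeta .
\]
Here $h$ is the entire extension of the symbol to $\mathbb C^{2d}$, and the argument $\Yeps\zeta+z\in\mathbb C^{2d}$ is well defined for every real $\zeta$. We insert the exponential growth bound and use the triangle inequality together with the operator norm, $\norm{\Yeps\zeta+z}_2\le\norm{\Yeps}_2\norm{\zeta}_2+\norm{z}_2$. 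This gives
\[
\abs{\hB(\zB)}\le c_1(2\pi)^{-d}e^{c_2\norm{z}_2}\int_{\mathbb R^{2d}}\exp\!\left(c_2\norm{\Yeps}_2\norm{\zeta}_2-\tfrac12\norm{\zeta}_2^2\right)d\zeta .
\]
The problem is now reduced to estimating a radial integral that depends only on the scalar $a:=c_2\norm{\Yeps}_2\ge0$.

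Next we pass to polar coordinates in $\mathbb R^{2d}$. With $\omega_{2d}$ the surface area of the unit sphere, the integral becomes $\omega_{2d}\int_0^\infty r^{2d-1}e^{ar-r^2/2}\,dr$. Completing the square, $ar-\tfrac12 r^2=\tfrac{a^2}{2}-\tfrac12(r-a)^2$, produces the factor $e^{a^2/2}=\exp(\tfrac{c_2^2}{2}\norm{\Yeps}_2^2)$, and we are left with $\int_0^\infty r^{2d-1}e^{-(r-a)^2/2}dr$. We substitute $r=s+a$ and extend the range of integration to all $s\in\mathbb R$. The elementary inequality $(\abs{s}+a)^{2d-1}\le 2^{2d-2}\bigl(\abs{s}^{2d-1}+a^{2d-1}\bigr)$ then bounds the remaining integral by $2^{2d-2}\bigl(m_{2d-1}+\sqrt{2\pi}\,a^{2d-1}\bigr)$, where $m_{2d-1}$ is the $(2d-1)$-st absolute moment of the one-dimensional standard Gaussian.

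Finally we collect all dimension-dependent constants into $C_d$, namely $(2\pi)^{-d}$, $\omega_{2d}$, $2^{2d-2}$, $m_{2d-1}$ and $\sqrt{2\pi}$. This yields exactly the factor $c_1C_d\exp(\tfrac{c_2^2}{2}\norm{\Yeps}_2^2+c_2\norm{z}_2)(a^{2d-1}+1)$ with $a^{2d-1}=c_2^{2d-1}\norm{\Yeps}_2^{2d-1}$. There is no serious obstacle. The only point needing care is that for complex $\Yeps$ the norm $\norm{\Yeps}_2$ must be the operator norm on $\mathbb C^{2d}$, so that the triangle-inequality step is legitimate. The same bound also shows that the integral converges absolutely and locally uniformly in $\zB$, which is what the subsequent analyticity argument needs.
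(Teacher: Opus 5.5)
Your proof is correct and follows essentially the same route as the paper: the growth bound with the operator-norm triangle inequality, polar coordinates in $\mathbb R^{2d}$, and completing the square to extract $e^{a^2/2}$ with $a=c_2\|Y^{(\varepsilon)}\|_2$. The only difference is the final estimate of $\int_0^\infty r^{2d-1}e^{-(r-a)^2/2}\,dr$. The paper splits at $r=2a$, bounding $r^{2d-1}\le(2a)^{2d-1}$ on $[0,2a]$ and using $(r-a)^2\ge r^2/4$ beyond. You instead shift $r=s+a$ and use $(|s|+a)^{2d-1}\le 2^{2d-2}(|s|^{2d-1}+a^{2d-1})$, which yields the same $a^{2d-1}+1$ form and also covers $a=0$ directly.
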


    \begin{proof}
    By the exponential bound and the integral representation \eqref{equation: weyl calculus}, we get
            \begin{align*}
                \abs{\hB(\zB)} &\le
                (2\pi)^{-d} \int_{\mathbb R^{2d}} 
                |h(\Yeps\zeta + z)| \exp(-\tfrac12\zeta^\top\zeta) \, d\zeta\\
                &\le c_1 \int_{\mathbb{R}^{2d}} \exp\left( -\tfrac{1}{2}\norm{\zeta}_2^2 + c_2\left(\|\Yeps\|_2 \norm{\zeta}_2 + \norm{z}_2\right)\right)\, d\zeta
                %\\& \le C(c_1, c_2, d)\exp\left(c_2^2 \norm{\Yeps}_2^2 + c_2\norm{z}_2\right).
            \end{align*}
    For $a = c_2 \norm{\Yeps}_2>0$, we have 
    \begin{align*}
        & \int_{\mathbb{R}^{2d}} \exp\left( -\tfrac{1}{2}\norm{\zeta}_2^2 + a \norm{\zeta}_2 \right) d\zeta 
             = \frac{2\pi^d}{\Gamma(d)} \int_0^\infty r^{2d-1} e^{-r^2/2 + ar} dr\\
             &\le \frac{2\pi^d}{\Gamma(d)} \, e^{a^2/2}\left(  (2a)^{2d-1} \int_{0}^{2a} e^{-(r-a)^2/2} dr + \int_{2a}^\infty r^{2d-1} e^{-r^2/8}dr\right) \\
             &\le \frac{2\pi^d}{\Gamma(d)} \, e^{a^2/2}\left(  (2a)^{2d-1} \sqrt{2\pi} + \int_{0}^\infty r^{2d-1} e^{-r^2/8}dr\right) \\
             &\le C_d \, e^{a^2/2} \left( a^{2d-1} + 1\right) .
    \end{align*}
    \end{proof}
      
    \begin{lemma}[Analyticity]
        If $h(z)$ is an entire function of at most exponential growth, then  
        the averaged Hamiltonian $\hB(\zB)$ is an analytic function on each poly-disc 
        $\left\{\zB \in \mathbb{C}^{2D}\mid \norm{z}_2\le R,\, \norm{\Yeps}_2\le M\right\}$ and bounded by a constant, that depends 
        on $R,M$ and the constants $C_d,c_1,c_2$ of \Cref{lem:bound}. 
        \begin{proof}
            We use Morera's theorem for each coordinate $\zB_j$, $j=1,\ldots,2D$, which is enough by Hartog's theorem \cite[Thm. 2.2.8]{Ho73}, to show analyticity, i.e., we need to prove \begin{align*}
                \oint_C \hB(\zB)\, d\zB_j = 0
            \end{align*}
            for closed piecewise $C^1$ curves in $\mathbb{C}$. Since $\hB(\zB)$ is given by the phase space integral \eqref{equation: weyl calculus}, we need to justify exchanging the order of integration. By \Cref{lem:bound} and Fubini, we can interchange, and since $h(\Yeps\zeta + z)$ is analytic in $z, \Yeps$, we can again employ Morera's theorem to conclude that the contour integral is zero.
        \end{proof}
    \end{lemma}
    
%%%%%%%%%%%%%%%%%%%%%%%%%%%%%%%%%%%%%%%%%%%%%%%%%    
\section{Numerical experiments}\label{sec:num}

 We present numerical examples to showcase the structure-preservation and long-time behavior of the proposed symplectic integrator. We use the second-order version specified below in \cref{sec:second}. An order four integrator can be constructed using, e.g., the splitting integrator of \cite[eq.(63)]{BM2001} and the classic Runge--Kutta order four scheme. 
 
\subsection{Symplectic splitting integrator of order two}\label{sec:second}
To compare the symplectic splitting and the second order Boris-type method \eqref{equation: Boris one-step map}, we consider the order two  integrator based on a mid-point time-average, Strang splitting, and the partitioned Runge--Kutta method given by Heun's rule:
    \begin{align}\label{equation: order 2 example}\renewcommand\arraystretch{1.2}
        \begin{array}{c|cc}
        0\\
        1 & 1\\
        \hline
        & \frac{1}{2} &\frac{1}{2}
        \end{array},\qquad
        \renewcommand\arraystretch{1.2}
        \begin{array}
        {c|cc}
        0&\frac{1}{2}&-\frac{1}{2}\\ 
        1 & \frac{1}{2}&\frac{1}{2}\\
        \hline
        & \frac{1}{2} &\frac{1}{2}
        \end{array}.
    \end{align}
    The position update is $\qB_{n+1} = \qB_n - \frac{\stepsize}{2}\left(\AB(\qB_n) + \AB(\qB_n^-)\right)$ with $\qB_n^- = \qB_n-\stepsize \AB(\qB_n)$, and by \cref{prop:stability_function} the momentum update can be written as
    \begin{align*}
        \vB_{n+1} = \bigg(\Id_{D} & - \frac{\stepsize}{2}\left(J_{\AB}^\top(\qB_n) + J_{\AB}^\top(\qB_n^-\right) 
        %\\ & 
        + \frac{\stepsize^2}{2}J_{\AB}^\top(\qB_n) J_{\AB}^\top(\qB_n^-)\bigg)^{-1}\vB_n,
    \end{align*}
    where $\AB$ and $J_{\AB}^\top = \nabla_{\qB} \AB ^\top$ are evaluated at time $t_n+\frac{\stepsize}{2}$, if the vector potential $A$ depends explicitly on time. For this method, $\kappa_\rho = \sqrt{3}-1\approx 0.73$ in the proof of well-posedness in \cref{theo:mag_disc}.
    We provide a general-purpose implementation of this integrator\footnote{\url{https://gitlab.lrz.de/00000000014AA221/magn_wave_packet_integration}} based on 
    tensorized Gauss--Hermite quadrature with $N^d$ nodes ($N = 7,5,11$ in subsections \ref{sec:sin}, \ref{sec:Penning}, \ref{sec:sym}, respectively) and automatic differentiation, which was used for the following numerical examples.
\subsection{Two-dimensional nonlinear vector potential}\label{sec:sin}
  First, we consider a two-dimensional, nonlinear, trigonometric vector potential $A$ similar to \cite[\S 6.1]{SBHL2025} in combination with a quadratic scalar potential $V$ to confine the trajectory, 
    \begin{align}\label{equation: non-lin ex}
        A(t, x) = \begin{pmatrix}
            \sin(x_1 + x_2 + \alpha \sin(t))\\ -\sin(x_1 + x_2 + \alpha \sin(t))
        \end{pmatrix}, \quad V(t,x) = x_1^2 + x_2^2.
    \end{align}
    For illustration of structure preservation, we set $\alpha = \frac{1}{2}$; for near-conservation of energy, we set $\alpha = 0$.
    We set the semiclassical parameter $\varepsilon = 0.001$. 
    As initial condition, we pick 
    \begin{align*}
        q_0 = \begin{pmatrix}
            1\\1
        \end{pmatrix},\,p_0 = \begin{pmatrix}
            1\\0
        \end{pmatrix},\,Q_0 = \Id_2,\,P_0 = i\Id_2,\, S_0 = 0 \text{ and } t_0 = 0.
    \end{align*} 
    \begin{figure}[H]
        \centering
        \includegraphics[width=0.48\linewidth]{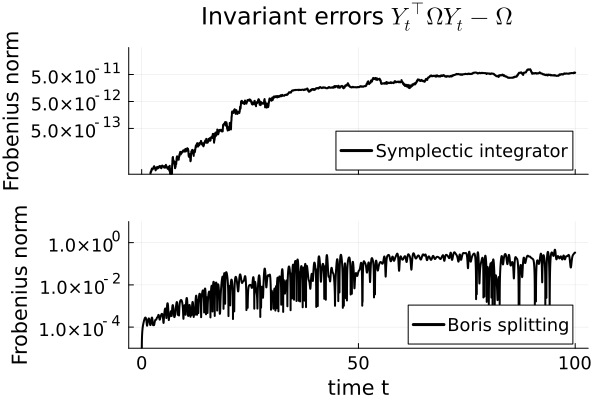}
        \quad
        \includegraphics[width=0.48\linewidth]{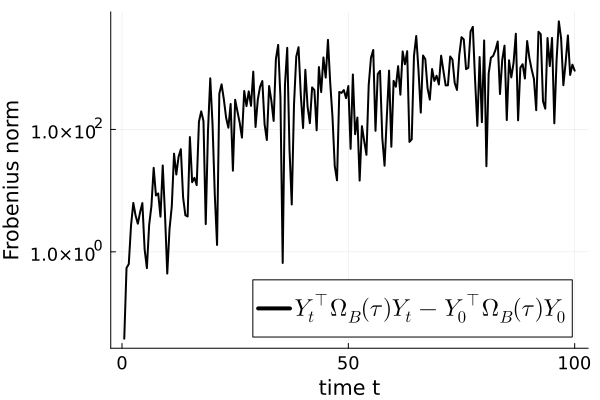}
        \caption{The symplectic and the Boris splitting integrator are applied to system \eqref{equation: non-lin ex} with $\alpha = 1/2$ for step size $\tau = 0.01$; the plots on the left show the deviation from symplecticity over time, the plot on the right illustrates that a nonlinear vector potential destroys the invariance proven in \cref{proposition: modified magnetic invariant}; the $y$-axis is scaled logarithmically.\label{fig:nonlinear_symp}}
    \end{figure}
    In \Cref{fig:nonlinear_symp}, we illustrate structure preservation for the two integrators. Generally, we would expect the error of the quadratic Hagedorn invariant $Y_t^\top\Omega Y_t$ from \eqref{equation: symplecticity} to be around $\varepsilon^{-\frac{1}{2}}$ times machine precision, as all calculations are performed on the scale of $\Yeps = \sqrt{\frac{\varepsilon}{2}}\, Y$, and we have an error of order machine precision there. Here, due to the complicated dynamics,  $\Yeps$ grows to order 1, and we thus see an error of magnitude $\varepsilon^{-1}$ times machine precision.
    Similarly, the symplecticity error for the Boris splitting integrator is of the order of the step size squared times $\varepsilon^{-1}$,  $\stepsize^2\varepsilon^{-1}=10^{-1}$, with oscillations and a slight drift. Since the vector potential $A$ is nonlinear, the modified invariance property of \Cref{proposition: modified magnetic invariant} cannot be expected to hold, as illustrated in the plot on the right. In \Cref{fig:nonlinear_ener}, we see both long-time near-conservation and the second order of the error with respect to the step size $\tau$, for the symplectic integrator, as predicted by \cref{theorem: near-cons energy}. The Boris splitting integrator, in contrast, exhibits a slight energy drift despite the presence of a quadratic confining potential $V$. It seems that the near-conservation results for the classical Boris integrator \cite[\S 4]{HL2018} do not extend to the semiclassical case.
    \begin{figure}[H]
        \centering
        \includegraphics[width=0.4\linewidth]{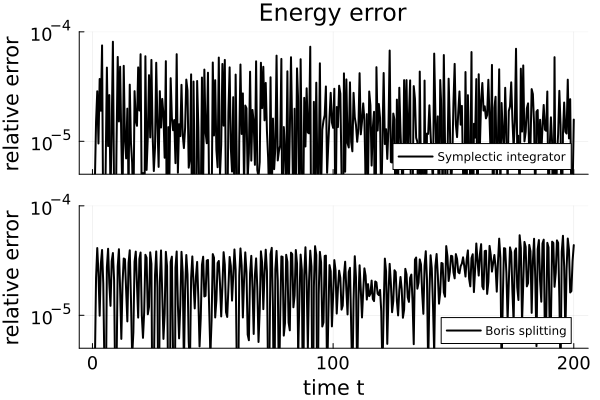}
        \quad
        \includegraphics[width=0.4\linewidth]{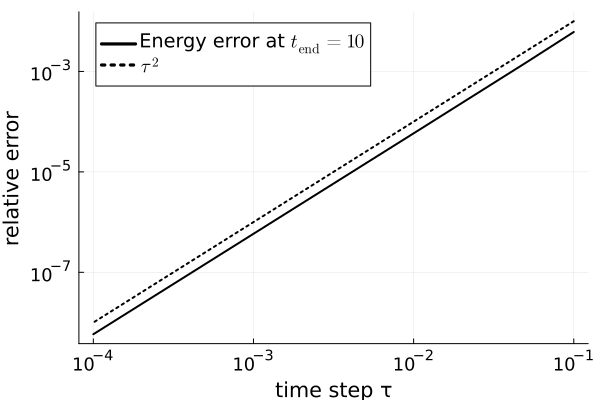}
        \caption{
        The symplectic and the Boris splitting integrator are applied to system \eqref{equation: non-lin ex} with $\alpha = 0$ for step size $\tau = 0.01$; on the left, we plot the relative energy error of both integrators over time; the $y$-axis is scaled logarithmically. On the right, we plot the relative error of the energy of the symplectic integrator at time $10$ for different step sizes against a dashed reference line $\stepsize\mapsto\stepsize^2$; both axes are scaled logarithmically.\label{fig:nonlinear_ener}}
    \end{figure}

\subsection{Three-dimensional Penning trap}\label{sec:Penning}
    We explore a system with linear vector potential. The three-dimensional model concerns a charged microscopic particle (an electron or a proton) in a macroscopic hyperbolic Penning trap as derived in \cite[\S 2]{SBHL2025}. We have $A$ and $V$ given by
    \begin{align}\label{equation: Penning trap}
        A(t, x) = 57.125\begin{pmatrix}
            -x_2\\x_1\\0
        \end{pmatrix}, \quad V(t,x) = 113.25\left(x_3^2 - \frac{1}{2}\left(x_1^2+x_2^2\right)\right),
    \end{align}
     semiclassical parameter $\varepsilon = 1.19\cdot10^{-8}$ and initial conditions 
     \begin{align*}
        q_0 = \begin{pmatrix}
            0.133\\0.133\\0.258
        \end{pmatrix},\,p_0 = \begin{pmatrix}
            0.133\\7.492\\3.879
        \end{pmatrix},\,Q_0 = \mathrm{diag}(q_0),\,P_0 = iQ_0^{-1},\, S_0 = 1.009 \text{ and } t_0 = 0.
    \end{align*}
    \Cref{fig:penning} shows, that the Hagedorn invariant oscillates but remains bounded for the Boris splitting. The modification of the invariant for the Boris splitting formulation has an error of magnitude $\varepsilon^{-\frac{1}{2}}$ times machine precision, very similar to the error of the symplectic invariant of the symplectic integrator. This shows that \cref{proposition: modified magnetic invariant} correctly predicts the behavior of the symplectic invariant up to machine errors.
    \begin{figure}[H]
        \centering
        \includegraphics[width=0.4\linewidth]{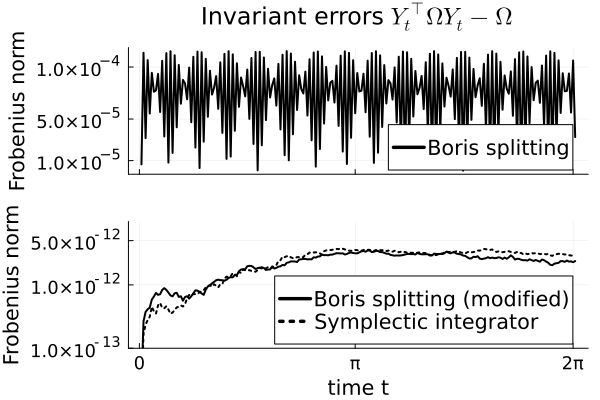}
        \includegraphics[width=0.4\linewidth]{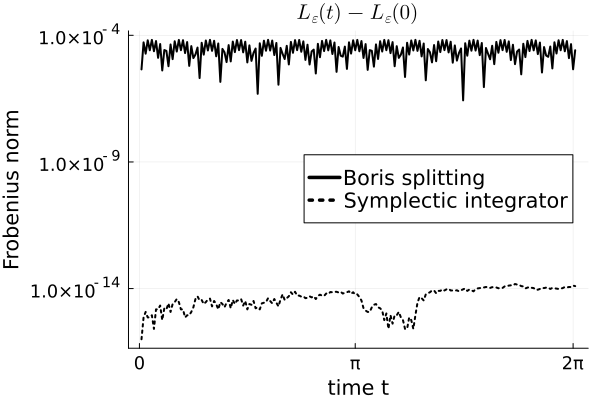}
        \caption{The Boris and the symplectic splitting integrator is applied to the Penning trap with step size $\stepsize = 0.001$. The upper left plot shows the Boris error in the invariant $Y_t^\top\Omega Y_t - \Omega$. Below, we compare the modified invariant  $Y_t^\top \Omega_B(\stepsize) Y_t - Y_0^\top\Omega_B(\stepsize)Y_0$ with the symplectic invariant of the symplectic integrator. On the right, we compare the semiclassical angular-momentum error in the $ x_1$- $ x_2$ plane for both integrators. In both plots, the $y$-axis is scaled logarithmically.}\label{fig:penning}
    \end{figure}

    \subsection{Two-dimensional symmetric vector potentials}\label{sec:sym}
    Finally, we explore the conservation of linear and angular momenta; we modify the potential \eqref{equation: non-lin ex} to have the required symmetries, but keep the initial conditions and the semiclassical parameter $\varepsilon = 0.001$. We use 
    \begin{align*}
        A(t, x) = \begin{pmatrix}
            \sin(x_1 - x_2)\\ \sin(x_1 - x_2)
        \end{pmatrix}, \quad V(t,x) = (x_1-x_2)^2
    \end{align*}
    for translational and 
    \begin{align*}
        A(t, x) = \frac{1}{1+x_1^2 + x_2^2} \begin{pmatrix}
            -x_2\\ x_1
        \end{pmatrix}, \quad V(t,x) = \frac{1}{2}(x_1^2 + x_2^2)
    \end{align*}
    for rotational symmetry. Note that rotational symmetry means that $A$ is covariant under rotation, i.e., $A(Rx) = RA(x)$ for all $R\in SO(2)$ and $x\in\mathbb R^2$, which is satisfied by the above potential. As discussed in \cref{sec:invariants}, we monitor the total linear momentum $p_1(t) + p_2(t) = (1,1)^\top p(t)$ and the semiclassical angular momentum $L_\varepsilon(t)$ for the translational and the rotational symmetric case, respectively. As expected by \Cref{cor:conserve}, the plots in \cref{fig:sym} show conservation at roughly machine precision. 

    \begin{figure}[H]
        \centering
        \includegraphics[width=0.48\linewidth]{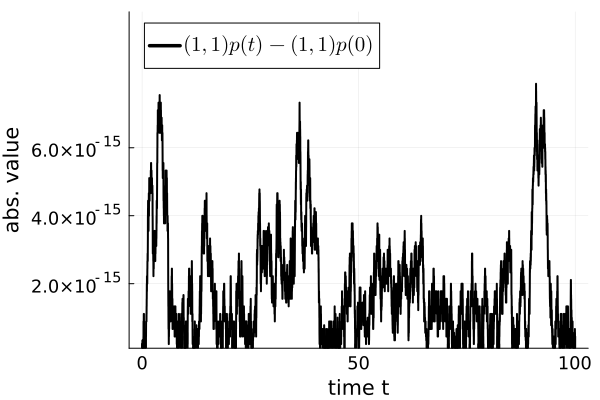}
        \quad
        \includegraphics[width=0.48\linewidth]{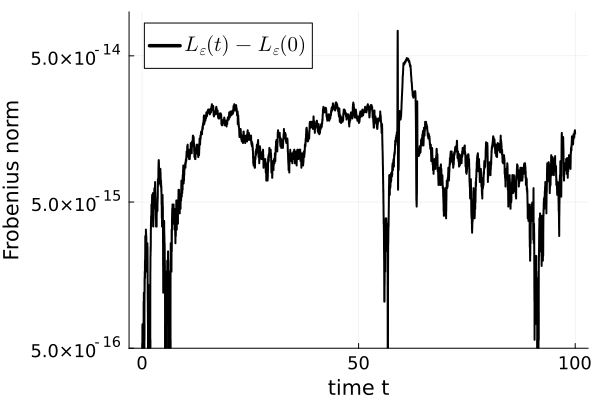}
        \caption{We apply the symplectic integrator with step size $\stepsize = 0.01$ to the two symmetric systems described above. On the left, we plot the error of the total linear momentum for the first system; on the right, we plot the error of the semiclassical angular momentum for the second system, here the $y$-axis is scaled logarithmically. \label{fig:sym}}
    \end{figure}

%%%%%%%%%%%%%%%%%%%%%%%%%%%%%%%%%%%%
\section{Conclusion}
We have developed a class of structure-preserving time integration schemes for Gaussian wave packet dynamics associated with the magnetic Schrödinger equation. We constructed Boris-type methods as well as explicit high-order symplectic integrators based on splitting and partitioned Runge–Kutta techniques. While the Boris approach captures key features of the magnetic flow, it does not exactly preserve the quadratic invariants underlying the Hagedorn parametrization. In contrast, the proposed symplectic schemes conserve these invariants, thereby guaranteeing square integrability of the wave packet and ensuring consistency of the variational approximation over long time intervals. The analysis established uniform error bounds in the semiclassical parameter for both the wave packet parameters and observable quantities, together with near-conservation of the averaged Hamiltonian over exponentially long times. Numerical experiments confirm the favorable long-time and structure preserving behavior of the integrators.
 
\appendix
\section{Wigner--Weyl properties}

\begin{lemma}[Gaussian Wigner function]\label{proposition: wigner weyl calculus}
        The Wigner function $\mathcal W_u$ of a normalised Gaussian wave packet in Hagedorn parametrization $u = u[q,p,Q,P,S]$ is a phase space Gaussian
        \begin{align*}
            \mathcal{W}_u(\zeta) = \frac{(2\pi)^{-d}}{\mathrm{det}(\Sigma_\varepsilon)}\exp\left(-\frac{1}{2}(\zeta-z)^\top\Sigma_\varepsilon^{-1}(\zeta-z)\right),\quad \zeta\in\mathbb R^{2d},
        \end{align*}
        with mean and covariance matrix
        \begin{align*}
            z = 
            \begin{pmatrix}
                q\\ p
            \end{pmatrix},\quad
            \Sigma_\varepsilon = \frac{\varepsilon}{2}
            \begin{pmatrix}
                QQ^* & \Re(QP^*)\\ \Re(PQ^*) & PP^*
            \end{pmatrix}.
        \end{align*}
        Moreover, we can rewrite $\Sigma_\varepsilon = \frac{\varepsilon}{2}YY^\top$, with $Y$ from \eqref{equation: symplecticity}.
        \end{lemma}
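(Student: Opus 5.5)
The plan is to compute the Wigner transform of the Hagedorn Gaussian directly and reduce it to a Gaussian integral, which I then identify through the structure of $Y$. After translating by $(q,p)$ and removing the phase $e^{iS/\varepsilon}$ (the Wigner function is quadratic in $u$, so the phase cancels), I may assume $q=p=0$. Writing $C=PQ^{-1}=X+iW$ with $X,W$ real symmetric, the symmetry of $C$ follows from the symplecticity of $Y$ via $Q^\top P=P^\top Q$. Then, with
\[
\mathcal W_u(x,\xi)=(2\pi\varepsilon)^{-d}\int u(x+\tfrac y2)\overline{u(x-\tfrac y2)}e^{-i\xi^\top y/\varepsilon}\,dy,
\]
the product of the two exponentials has exponent $\frac{i}{\varepsilon}(x^\top X y)-\frac1\varepsilon(x^\top W x+\tfrac14 y^\top W y)$. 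This is a Gaussian integral in $y$ with covariance governed by $W$. Evaluating it gives
\[
\mathcal W_u(x,\xi)=c\,\exp\!\Big(-\tfrac1\varepsilon\big[x^\top Wx+(\xi-Xx)^\top W^{-1}(\xi-Xx)\big]\Big),
\]
with an explicit constant $c$ that involves $|\det Q|^{-1}$ and $\det W^{-1/2}$.

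Next I identify the quadratic form as $\frac12\zeta^\top\Sigma_\varepsilon^{-1}\zeta$ with $\Sigma_\varepsilon=\frac\varepsilon2 YY^\top$. The key algebraic facts come from the symplecticity relations $Q^\top P-P^\top Q=0$ and $Q^*P-P^\top\bar Q=2i\,\Id$. These give $W=(QQ^*)^{-1}$ and $X=\Re(PQ^*)(QQ^*)^{-1}$ (the standard Hagedorn identities). With them the quadratic form can be written as $\frac2\varepsilon\zeta^\top G\zeta$, where
\[
G=\begin{pmatrix}W+XW^{-1}X & -XW^{-1}\\ -W^{-1}X & W^{-1}\end{pmatrix}.
\]
It then suffices to check $G=(YY^\top)^{-1}=\Omega_d^\top YY^\top\Omega_d$, where the last equality uses $Y^{-1}=-\Omega_dY^\top\Omega_d$ for symplectic $Y$. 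Expanding $YY^\top$ in blocks gives $QQ^*$, $\Re(QP^*)$ and $PP^*$, which is the claimed formula for $\Sigma_\varepsilon$. Conjugating by $\Omega_d$ swaps the roles of these blocks, and matching against $G$ reduces to the identity $PP^*=W+XW^{-1}X$, i.e. $PP^*=\Re(PQ^*)(QQ^*)^{-1}\Re(QP^*)+(QQ^*)^{-1}$. That identity is again a consequence of the relations above.

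Finally, I verify the prefactor, using $\det(YY^\top)=1$. The cleanest route is a consistency check rather than tracking the constant: $\mathcal W_u$ integrates to $\|u\|^2=1$, so the prefactor must be that of a normalized Gaussian with covariance $\Sigma_\varepsilon$, namely $(2\pi)^{-d}\det(\Sigma_\varepsilon)^{-1/2}$. This agrees with the stated formula once the paper's determinant convention is read accordingly; since $\det Y=1$, we have $\det\Sigma_\varepsilon=(\varepsilon/2)^{2d}$ either way. I expect the main obstacle to be the block identity $G=(YY^\top)^{-1}$, specifically the derivation of $W=(QQ^*)^{-1}$ and $X=\Re(PQ^*)(QQ^*)^{-1}$ from the complex form of the symplecticity conditions. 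I would establish these first, as a separate lemma, by multiplying $C=PQ^{-1}$ on the left by $Q^*$ and on the right by $Q$ and using $Q^*P-P^*Q=2i\,\Id$.
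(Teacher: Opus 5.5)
Your proposal is correct, and its last step is the paper's own argument: identify the Gaussian's matrix with $\Omega_d^\top YY^\top\Omega_d$ and invert it using symplecticity. The difference is in how you reach the Gaussian form. The paper imports from \cite{LL2020} (Prop.~6.15, Lem.~6.17) that $\mathcal W_u=(\varepsilon\pi)^{-d}\exp(-\frac1\varepsilon(\zeta-z)^\top G(\zeta-z))$, with $G$ already written through $PP^*$, $\Re(PQ^*)$ and $QQ^*$. You derive it yourself: you do the $y$-integral in terms of $X=\Re C$ and $W=\Im C$, then convert using the Hagedorn identities $W=(QQ^*)^{-1}$, $X=\Re(PQ^*)(QQ^*)^{-1}$ and $PP^*=CQQ^*\bar C=W+XW^{-1}X$.

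Your route is self-contained and shows where each symplecticity relation enters. One such fact is worth stating explicitly: $QQ^*$ and $PP^*$ are real, so the diagonal blocks $\Re(QQ^*)$ and $\Re(PP^*)$ of $YY^\top$ really are $QQ^*$ and $PP^*$. The paper's version is shorter but rests entirely on the citation.

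Two small slips. First, the complex relation is $Q^*P-P^*Q=2i\,\Id$, not $Q^*P-P^\top\bar Q=2i\,\Id$; you use the correct form at the end. Second, the exponent is $-\frac1\varepsilon\zeta^\top G\zeta$. This equals $-\frac12\zeta^\top\Sigma_\varepsilon^{-1}\zeta$ exactly when $G=(YY^\top)^{-1}$, so your conclusion stands, but the factor $\frac2\varepsilon$ as written is off.

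On the prefactor, be decisive: this is not a question of convention. With $\det(\Sigma_\varepsilon)$ in the denominator, the stated function has total mass $\det(\Sigma_\varepsilon)^{-1/2}=(2/\varepsilon)^d\neq1$. So the statement has a typo and must read $(2\pi)^{-d}\det(\Sigma_\varepsilon)^{-1/2}=(\pi\varepsilon)^{-d}$. This corrected constant is what three independent checks give:
\begin{itemize}
\item your normalization argument;
\item tracking your constant $c$ directly, using $\det(W)^{-1/2}=|\det Q|$;
\item the formula the paper cites from \cite{LL2020}.
\end{itemize}
The paper's proof only matches exponents and never checks the constant, so on this point your argument is more complete than the paper's.
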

        
        \begin{proof}
            It was shown in \cite[Prop. 6.15 \& Lem. 6.17]{LL2020} that $\mathcal{W}_u$ is a Gaussian
            \begin{align*}
                \mathcal{W}_u = (\varepsilon\pi)^{-d}\exp\left(-\frac{1}{\varepsilon}(\zeta-z)^\top G (\zeta-z)\right),\text{ with } G = \begin{pmatrix}
                PP^* & -\Re(PQ^*)\\ -\Re(QP^*) & QQ^*
            \end{pmatrix},
            \end{align*}
            with $G$ symplectic. Now, observe that
            \begin{align*}
                \Omega YY^\top\Omega^\top &= \begin{pmatrix}
                0 & \Id \\ -\Id & 0
            \end{pmatrix} \begin{pmatrix}
                QQ^* & \Re(QP^*)\\ \Re(PQ^*) & PP^*
            \end{pmatrix} \begin{pmatrix}
                0 & -\Id \\ \Id & 0
            \end{pmatrix} \\&= 
            \begin{pmatrix}
                PP^* & -\Re(PQ^*)\\ -\Re(QP^*) & QQ^*
            \end{pmatrix} = G
            \end{align*}
            and thus $G = \frac{\varepsilon}{2}\Sigma_\varepsilon^{-1}$ using $G^{-1} = \Omega G\Omega^\top$ as $G$ is symplectic.
        \end{proof}

\begin{lemma}[Hamiltonians]\label{lem:symbol}
The magnetic Schr\"odinger operator given in \eqref{eq:mag_ham} is the semiclassical Weyl-quantization $\mathcal H(t) = \mathrm{op}(h(t))$ of the classical symbol
\[
h(t,x,\xi) = \frac{1}{2}\abs{\xi}^2 - A(t,x)^\top \xi + \frac{1}{2}\abs{A(t,x)}^2 + V(t,x)
\]
with $(t,x,\xi)\in\mathbb R\times\mathbb R^d\times \mathbb R^d$. For a normalised Gaussian $u\in\mathcal M$ with canonical parameters 
$(\qB,\pB)\in\mathbb R^D\times\mathbb R^D$ the average $\hB (t,\qB ,\pB ):=\langle h(t,\cdot)\rangle_u$ satisfies
\[
\hB (t,\qB ,\pB )=\frac{1}{2}\pB ^\top \pB -\pB ^\top \AB (t,\qB )+\VB (t,\qB )
\]
with the averaged potentials $\AB(t,\qB)$ and $\VB(t,\qB)$ defined \cref{sec:mag_dyn}. If $A(t,x) = M_A(t)x$ for some $M_A(t)\in\mathbb R^{d\times d}$, then 
\[
\VB (t,\qB ) = \tfrac12\AB(t,\qB)^\top \AB(t,\qB) + \langle V(t,\cdot)\rangle_u. 
\]
\end{lemma}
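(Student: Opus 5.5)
The plan has two parts: first identify the Weyl symbol, then compute its Gaussian average with the help of \cref{proposition: derivative average}.

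\textbf{Symbol.} Expanding the operator gives
\[
\tfrac12|{-i\varepsilon\nabla}-A|^2 = -\tfrac{\varepsilon^2}{2}\Delta + \tfrac{i\varepsilon}{2}\bigl(\nabla\cdot A + A\cdot\nabla\bigr) + \tfrac12|A|^2 .
\]
Now use the standard Weyl-quantization facts. The function $\frac12|\xi|^2$ quantizes to $-\frac{\varepsilon^2}{2}\Delta$. A function of $x$ alone quantizes to a multiplication operator. For a symbol $a_j(x)\xi_j$ that is linear in $\xi$, the Weyl rule gives the symmetrized product
\[
\mathrm{op}(a_j\xi_j) = \tfrac12\bigl(a_j(x)(-i\varepsilon\partial_j)+(-i\varepsilon\partial_j)a_j(x)\bigr).
\]
Taking $a_j=-A_j$ and summing over $j$ reproduces exactly the middle cross term above. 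Therefore $h = \frac12|\xi|^2 - A^\top\xi + \frac12|A|^2 + V$, and no $\varepsilon$-correction appears, because the Weyl ordering is symmetric.

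\textbf{Average.} Write $\Yeps=\begin{pmatrix}\Yeps_q\\ \Yeps_p\end{pmatrix}$, with $\Yeps_q=\scs(\Re Q,\Im Q)$ and $\Yeps_p=\scs(\Re P,\Im P)$. In the integral representation \eqref{equation: weyl calculus} the variable is $\zeta\mapsto(q+\Yeps_q\zeta,\;p+\Yeps_p\zeta)$. I average the three pieces of $h$ separately.

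The kinetic term gives
\[
\tfrac12\langle|\xi|^2\rangle_u=\tfrac12|p|^2+\tfrac12\tr(\Yeps_p\Yeps_p^\top).
\]
Since $\|\mathrm{vec}\,\Yeps_p\|^2=\tr(\Yeps_p\Yeps_p^\top)$, this equals $\frac12\pB^\top\pB$.

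The terms $\frac12|A|^2+V$ depend only on $x$. Their averages depend only on $q$ and $\Yeps_q$, which are exactly the components of $\qB$, and this gives $\VB$.

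The mixed term is the only delicate one. Expanding,
\[
\langle A^\top\xi\rangle_u=(2\pi)^{-d}\int A(q+\Yeps_q\zeta)^\top(p+\Yeps_p\zeta)\,e^{-|\zeta|^2/2}\,d\zeta .
\]
The part containing $p$ gives $p^\top\langle A\rangle_u$. For the part containing $\Yeps_p\zeta$, I integrate by parts in $\zeta$ using $\zeta e^{-|\zeta|^2/2}=-\nabla_\zeta e^{-|\zeta|^2/2}$. This is Stein's lemma, i.e.\ the same computation as in the proof of \cref{proposition: derivative average}. It yields
\[
\tr\bigl(\Yeps_p^\top\langle J_A\rangle_u\Yeps_q\bigr).
\]
In vectorized form this is
\[
\mathrm{vec}(\scs\Re P)^\top\mathrm{vec}(\langle J_A\rangle_u\scs\Re Q)+\mathrm{vec}(\scs\Im P)^\top\mathrm{vec}(\langle J_A\rangle_u\scs\Im Q),
\]
which is precisely $\pB^\top\AB-p^\top\langle A\rangle_u$ for $\AB$ as in \eqref{eq:pot_mag}. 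Collecting the three pieces gives the claimed form of $\hB$. I expect the main obstacle to be this bookkeeping of the $\mathrm{vec}$/trace identity, together with verifying that the scaling factors $\scs$ match on both sides.

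\textbf{Linear case.} If $A=M_Ax$, then $\langle A\rangle_u=M_Aq$ and $\langle J_A\rangle_u=M_A$. For the quadratic term,
\[
\tfrac12\langle|M_Ax|^2\rangle_u=\tfrac12|M_Aq|^2+\tfrac12\tr(M_A\Yeps_q\Yeps_q^\top M_A^\top).
\]
Vectorizing the second summand as $\frac12\|\mathrm{vec}(M_A\Yeps_q)\|^2$ shows that this equals $\frac12\AB^\top\AB$. Adding $\langle V\rangle_u$ gives the claimed formula for $\VB$.
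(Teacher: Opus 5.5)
Your proposal is correct and follows essentially the same route as the paper. For the symbol, the symmetric Weyl ordering of the cross terms is the same fact the paper expresses as the cancellation of the $\pm\tfrac{\varepsilon}{2i}\mathrm{div}A$ corrections. For the mixed average, you split off $p^\top\langle A\rangle_u$ and integrate by parts against the Gaussian, via Stein's lemma in the standardized coordinates of \eqref{equation: weyl calculus}, where the paper uses $\partial_\zeta\mathcal W_u=-\Sigma_\varepsilon^{-1}(\zeta-z)\mathcal W_u$ and the block $\Sigma_{21}$; your trace--vec bookkeeping for $\tr(\Yeps_q^\top\langle J_A\rangle_u^\top\Yeps_p)$ and the linear-case identity for $\tfrac12\AB^\top\AB$ check out.
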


\begin{proof}
For notational simplicity, we suppress time-dependence. We start by calculating the Weyl-symbol and expanding the noncommutative square,
\[
\abs{-i\varepsilon\nabla-A(x)}^2 = 
\abs{-i\varepsilon\nabla}^2 - \left( (-i\varepsilon\nabla)^\top A(x) + A(x)^\top (-i\varepsilon\nabla)\right)
+ \abs{A(x,x)}^2.
\]
We use $-i\varepsilon\nabla = \mathrm{op}(\xi)$ and rewrite the mixed terms using Weyl calculus, 
see \cite[Theorem 2.7.4]{Martinez2002}. We obtain
\begin{align*}
(-i\varepsilon\nabla)^\top A(x) 
&=  \mathrm{op}(\xi^\top A(x) + \tfrac{\varepsilon}{2i} \{\xi,A(x)\})
= \mathrm{op}(A(x)^\top\xi + \tfrac{\varepsilon}{2i} \mathrm{div}A(x)),\\
A(x)^\top (-i\varepsilon\nabla) &= \mathrm{op}(A(x)^\top\xi + \tfrac{\varepsilon}{2i}\{A(x),\xi\})
= \mathrm{op}(A(x)^\top\xi - \tfrac{\varepsilon}{2i} \mathrm{div}A(x)).
\end{align*}
The terms with $\mathrm{div}A(x)$ have opposite signs and cancel, and we arrive at the claimed formula. 
For the average, we use the Gaussian Wigner function of \cref{proposition: wigner weyl calculus} and standard formulas for 
its second moments to obtain
$\langle|\xi|^2\rangle_u = \pB^\top\pB$.
For the contribution, that is mixed in position and momentum, we write 
\[
\langle \xi^\top A(x)\rangle_u = p^\top \langle A(x)\rangle_u + \langle (\xi-p)^\top A(x)\rangle_u
\]
and use the Gaussian's gradient $\partial_\zeta \mathcal W_u(\zeta) 
=- \Sigma_\varepsilon^{-1}(\zeta-z) \mathcal W_u(\zeta)$. We write the covariance matrix in block decomposition
$\Sigma_\varepsilon = \left(\Sigma_{11},\Sigma_{12}; \Sigma_{21},\Sigma_{22}\right)$,
and perform an integration by parts, 
\begin{align*}
\langle (\xi-p)^\top A(x)\rangle_u &= 
\int_{\mathbb R^{2d}}  (\xi-p)^\top A(x) \mathcal W_u(\zeta) d\zeta\\
&= - \sum_{k=1}^d   \int_{\mathbb R^{2d}} A_k(x) \left(\Sigma_{21}\partial_x + \Sigma_{22}\partial_\xi\right)_k 
\mathcal W_u(\zeta) d\zeta\\
&= \sum_{k,\ell=1}^d   \int_{\mathbb R^{2d}} \partial_\ell A_k(x) (\Sigma_{21})_{k\ell}  
\mathcal W_u(\zeta) d\zeta\\
&= \frac\varepsilon2\sum_{k,\ell=1}^d \langle \partial_\ell A_k\rangle_u (\Re(P)\Re(Q)+\Im(P)\Im(Q))_{k\ell}, 
\end{align*}
which proves that $\langle\xi^\top A(x)\rangle_u = \pB^\top \AB(\qB)$. The remaining terms then define $\VB(\qB)$. 
In the linear case, the squared vector potential generates second moments, and we have
    \[
    \langle |A(t,\cdot)|^2\rangle_u = \langle x^\top M_A(t)^\top M_A(t)x\rangle_u = \AB(t,\qB)^\top \AB(t,\qB).
    \]
\end{proof}

\section*{Acknowledgments}
The authors would like to thank the organizers of the Winter School on "Mathematical Challenges in Quantum Mechanics" at Gran Sasso Science Institute, where the first discussions leading to this work took place. Furthermore, the first author would like to thank Carl Quitter for many helpful discussions. 

We have used Grammarly for editing and polishing written text. Furthermore, we used Claude Code as a coding assistant for the numerical experiments.

\newpage
\bibliographystyle{siamplain}
\bibliography{references}
\end{document}

% --- supplement: supplement.tex ---

\maketitle

\section{A detailed example}

Here we include some equations and theorem-like environments to show
how these are labeled in a supplement and can be referenced from the
main text.
Consider the following equation:
\begin{equation}
  \label{eq:suppa}
  a^2 + b^2 = c^2.
\end{equation}
You can also reference equations such as \cref{eq:matrices,eq:bb} 
from the main article in this supplement.

\lipsum[100-101]

\begin{theorem}
An example theorem.
\end{theorem}

\lipsum[102]
 
\begin{lemma}
An example lemma.
\end{lemma}

\lipsum[103-105]

Here is an example citation: \cite{KoMa14}.

\section[Proof of Thm]{Proof of \cref{thm:bigthm}}
\label{sec:proof}

\lipsum[106-112]

\section{Additional experimental results}
\Cref{tab:smfoo} shows additional
supporting evidence. 

\begin{table}[htbp]
\footnotesize
  \caption{Example table.}\label{tab:smfoo}
\begin{center}
  \begin{tabular}{|c|c|c|} \hline
   Species & \bf Mean & \bf Std.~Dev. \\ \hline
    1 & 3.4 & 1.2 \\
    2 & 5.4 & 0.6 \\ \hline
  \end{tabular}
\end{center}
\end{table}

\bibliographystyle{siamplain}
\bibliography{references}

%% file: shared.tex
\usepackage{lipsum}
\usepackage{amsfonts}
\usepackage{graphicx}
\usepackage{epstopdf}
\usepackage{algorithmic}
\usepackage{amssymb,mathtools}
\usepackage{float}
\usepackage{dsfont}
\usepackage{enumitem}
\usepackage{physics}

\usepackage{tikz}
\usetikzlibrary{fit}
\usetikzlibrary{arrows.meta, positioning, fit}

\ifpdf
  \DeclareGraphicsExtensions{.eps,.pdf,.png,.jpg}
\else
  \DeclareGraphicsExtensions{.eps}
\fi

\newsiamremark{remark}{Remark}
\newsiamremark{hypothesis}{Hypothesis}
\newsiamthm{assumption}{Assumption}
\newsiamremark{notation}{Notation}
\crefname{hypothesis}{Hypothesis}{Hypotheses}
\crefname{assumption}{Assumption}{Assumptions}
\crefname{notation}{Notation}{Notations}
\newsiamthm{claim}{Claim}
\newsiamremark{fact}{Fact}
\crefname{fact}{Fact}{Facts}

\headers{Integration for magnetic wave packet dynamics}{Sebastian Merk, Caroline Lasser}

\title{Structure-preserving integration for magnetic Gaussian wave packet dynamics\thanks{\funding{Funded by the Deutsche Forschungsgemeinschaft (DFG, German Research Foundation) -- TRR~352 -- Project-ID 470903074.}}}

\author{Sebastian Merk\footnotemark[2] \and Caroline Lasser\footnotemark[2]\thanks{Department of Mathematics, Technical University of Munich, Germany 
  (\email{s.merk@tum.de}, \email{classer@tum.de}).}}

\usepackage{amsopn}

\DeclareMathOperator{\qB}{\mathbf{q}}
\DeclareMathOperator{\pB}{\mathbf{p}}
\DeclareMathOperator{\AB}{\mathbf{A}}
\DeclareMathOperator{\VB}{\mathbf{V}}
\DeclareMathOperator{\hB}{\mathbf{h}}
\DeclareMathOperator{\vB}{\mathbf{v}}
\DeclareMathOperator{\BB}{\mathbf{B}}
\DeclareMathOperator{\EB}{\mathbf{E}}
\DeclareMathOperator{\zB}{\mathbf{z}}
\DeclareMathOperator{\stepsize}{\tau}
\DeclareMathOperator{\scs}{\sqrt{\frac{\varepsilon}{2}}}

\DeclareMathOperator{\Yeps}{Y^{(\varepsilon)}}
\DeclareMathOperator{\Id}{\mathrm{Id}}